\nonstopmode \numberwithin{equation}{section}
\newtheorem{definition}{Definition}[section]
\newtheorem{theorem}{Theorem}[section]
\newtheorem{proposition}{Proposition}[section]
 \newtheorem{corollary}{Corollary}[section]
\newtheorem{example}{Example}[section]
\newtheorem{remark}{Remark}[section]
\begin{document}

\title[On a certain extension of the Riemann-Liouville... ]{On a certain extension of the Riemann-Liouville fractional derivative operator}

\author[ K.S. Nisar, G. Rahman, Z. Tomovski ]{ Kottakkaran Sooppy  Nisar,  Gauhar Rahman,  Zivorad Tomovski}

\address{Kottakkaran Sooppy  Nisar:    Department of Mathematics, College of Arts and Science-Wadi Aldawaser, 11991,
Prince Sattam bin Abdulaziz University, Alkharj, Kingdom of Saudi Arabia}
\email{n.sooppy@psau.edu.sa; ksnisar1@gmail.com}

\address{Gauhar Rahman:    Department of Mathematics, International Islamic  University, Islamabad, Pakistan}
\email{gauhar55uom@gmail.com}

\address{Zivorad Tomovski: University ”St. Cyril and Methodius”, Faculty of Natural Sciences and Mathematics, Institute of Mathematics, Repubic of Macedonia.}
\email{tomovski@pmf.ukim.edu.mk}

\keywords{ Hypergeometric function, beta function, Extended  hypergeometric function,  Mellin transform, fractional derivative, Appell's function}

\subjclass[2010]{33C15, 33C05.}


\begin{abstract}
 The main aim of this present paper is to present a new  extension of the fractional derivative operator by using the extension of Beta function recently defined by Shadab et al. \cite{Choi2017}. Moreover, we establish some results related to the newly defined modified fractional derivative operator such as Mellin transform and relations to extended hypergeometric and Appell's function via generating functions.
 \end{abstract}


\maketitle

\section{Introduction and preliminaries}\label{intro}

We begin with the well-known Riemann-Liouville (R-L) fractional derivative of order $\mu$ is defined  (see \cite{Samko1933},\cite{Kilbas}) by
\begin{eqnarray}\label{eq1}
\mathfrak{D}_{x}^{\mu}\{f(x)\}=\frac{1}{\Gamma(-\mu)}\int_0^xf(t)(x-t)^{-\mu-1}dt, \Re(\mu)<0.
\end{eqnarray}
For the case $m-1<\Re(\mu)<m$ where $m=1,2,\cdots$, it follows
\begin{align}\label{eq2}
\mathfrak{D}_{x}^{\mu}\{f(x)\}&=\frac{d^m}{dx^m}\mathfrak{D}_{x}^{\mu-m}\Big\{f(x)\Big\}\notag\\
&=\frac{d^m}{dx^m}\Big\{\frac{1}{\Gamma(-\mu+m)}\int_0^xf(t)(x-t)^{-\mu+m-1}dt\Big\}
\end{align}
and
\begin{eqnarray}\label{eq3}
\mathfrak{D}_{x}^{\mu}\{x^\sigma\}=\frac{\Gamma(\sigma+1)}{\Gamma(\sigma-\mu+1)}x^{\sigma-\mu}.
\end{eqnarray}
The researchers (see \cite{Kiymaz,Luo,Olver,Parmar,Srivastava2012}) investigated the various extensions and generalization of fractional derivative operators.

The extended R-L fractional derivative of order $\mu$ is defined in \cite{Ozerslan} by
\begin{eqnarray}\label{Ofrac}
\mathfrak{D}_{x}^{\mu}\{f(x);p\}=\frac{1}{\Gamma(-\mu)}\int_0^xf(t)(x-t)^{-\mu-1}\exp\Big(-\frac{px^2}{t(x-t)}\Big) dt, \Re(\mu)<0.
\end{eqnarray}
For the case $m-1<\Re(\mu)<m$ where $m=1,2,\cdots$, it follows
\begin{align}\label{Ofrac1}
\mathfrak{D}_{x}^{\mu}\{f(x);p\}&=\frac{d^m}{dx^m}\mathfrak{D}_{x}^{\mu-m}\Big\{f(x);p\Big\}\notag\\
&=\frac{d^m}{dx^m}\Big\{\frac{1}{\Gamma(-\mu+m)}\int_0^xf(t)(x-t)^{-\mu+m-1}\exp\Big(-\frac{px^2}{t(x-t)}\Big) dt\Big\}.
\end{align}
An extension of fractional derivative operator established in \cite{Baleanu} is given by
\begin{eqnarray}\label{Efrac}
\mathfrak{D}_{x}^{\mu}\{f(x);p,q\}=\frac{1}{\Gamma(-\mu)}\int_0^xf(t)(x-t)^{-\mu-1}.
\exp\Big(-\frac{px}{t}-\frac{qx}{(x-t)}\Big) dt, \Re(\mu)<0.
\end{eqnarray}
For example,
\begin{align*}
\mathfrak{D}_{x}^{\mu}\left\{x^\nu;p,q\right\}_{x=1}=\frac{B_{p,q}(\nu+1,-\mu)}{\Gamma(-\mu)},
\end{align*}
where $B_{p,q}(x,y)$ is the extended Beta function (see \cite{Mehrez}) defined by
\begin{align*}
B_{p,q}(x,y)=\int_{0}^{1}t^{x-1}(1-t)^{y-1}{\mathcal{E}}_{p,q}(t)dt, x,y,p,q\in \mathbb{C}, \Re(p),\Re(q)>0,
\end{align*}
where ${\mathcal{E}}_{p,q}(t)=e^{-\frac{p}{t}-\frac{q}{1-t}}$.
For $p=q$ we denote $B_{p,q}$ by $B_{p}$ and for $p=q=0$, we get classical Beta function defined by
\begin{eqnarray}\label{beta}
B(x,y)=\int\limits_{0}^{1}t^{x-1}(1-t)^{y-1}dt, (\Re(x)>0, \Re(y)>0).
\end{eqnarray}

For the case $m-1<\Re(\mu)<m$ where $m=1,2,\cdots$, it follows
\begin{align}
\mathfrak{D}_{x}^{\mu}\{f(x);p,q\}&=\frac{d^m}{dx^m}\mathfrak{D}_{x}^{\mu-m}\Big\{f(x);p,q\Big\}\notag\\
&=\frac{d^m}{dx^m}\Big\{\frac{1}{\Gamma(-\mu+m)}\int_0^xf(t)(x-t)^{-\mu+m-1}\notag\\
&\times\exp\Big(-\frac{px}{t}-\frac{qx}{(x-t)}\Big) dt\Big\}.
\end{align}
Recently, Rahman et al. \cite{Rahman} define an extension of extended R-L fractional derivative of order $\mu$ as
\begin{align}\label{FEfrac}
\mathfrak{D}_{x}^{\mu}\{f(x);p,q,\lambda,\rho\}&=\frac{1}{\Gamma(-\mu)}\int_0^xf(t)(x-t)^{-\mu-1}\notag\\
&\times_1F_1\Big[\lambda;\rho;-\frac{px}{t}\Big]\,_1F_1\Big[\lambda;\rho;-\frac{qx}{(x-t)}\Big] dt, \Re(\mu)<0.
\end{align}
For the case $m-1<\Re(\mu)<m$ where $m=1,2,\cdots$, it follows
\begin{align}
&\mathfrak{D}_{x}^{\mu}\{f(x);p,q,\lambda,\rho\}=\frac{d^m}{dx^m}\mathfrak{D}_{x}^{\mu-m}\Big\{f(x);p,q,\lambda,\rho\Big\}\notag\\
&=\frac{d^m}{dx^m}\Big\{\frac{1}{\Gamma(-\mu+m)}\int_0^xf(t)(x-t)^{-\mu+m-1}
\,_1F_1\Big[\lambda;\rho;-\frac{px}{t}\Big]\,_1F_1\Big[\lambda;\rho;-\frac{qx}{(x-t)}\Big] dt\Big\},
\end{align}
where  $\Re(p)>0$ and $\Re(q)>0$. It is clear that when $\lambda=\rho$, then (\ref{FEfrac}) reduce to (\ref{Efrac}).\\

The Gauss hypergeometric  function which is defined (see \cite{Rainville1960}) as
\begin{eqnarray}\label{Hyper}
_2F_1(\sigma_1,\sigma_2;\sigma_3;z)=\sum\limits_{n=0}^{\infty}\frac{(\sigma_1)_n(\sigma_2)_n}{(\sigma_3)_n}\frac{z^n}{n!}, (|z|<1),
\end{eqnarray}
 $\Big(\sigma_1, \sigma_2, \sigma_3\in\mathbb{C}$ and $\sigma_3\neq0,-1,-2,-3,\cdots\Big)$.
 The integral representation of hypergeometric  hypergeometric function is defined by
\begin{eqnarray}\label{Ihyper}
_2F_1(\sigma_1,\sigma_2;\sigma_3;z)=\frac{\Gamma(\sigma_3)}{\Gamma(\sigma_2)\Gamma(\sigma_3-\sigma_2)}
\int_0^1t^{\sigma_2-1}(1-t)^{\sigma_3-\sigma_2-1}(1-zt)^{-\sigma_1}dt,
\end{eqnarray}
$\Big(\Re(\sigma_3)>\Re(\sigma_2)>0, |\arg(1-z)|<\pi\Big)$.\\
The Appell series or bivariate hypergeometric series and its integral representation  is respectively defined by
\begin{eqnarray}\label{CAppell}
F_{1}(\sigma_1,\sigma_2,\sigma_3;\sigma_4;x, y)=\sum\limits_{m,n=0}^{\infty}\frac{(\sigma_1)_{m+n}(\sigma_2)_{m}(\sigma_3)_{n}x^{m}y^{n}}{(\sigma_4)_{m+n}m!n!};
\end{eqnarray}
 for all $ \sigma_1,\sigma_2,\sigma_3,\sigma_4\in \mathbb{C},  \sigma_4\neq 0,-1,-2,-3,\cdots, \quad |x|<1, |y|<1$.\\
\begin{align}\label{FIAppell}
F_{1}\Big(\sigma_1,\sigma_2, \sigma_3,\sigma_4;x,y\Big)&=\frac{\Gamma(\sigma_4)}{\Gamma(\sigma_1)\Gamma(\sigma_4-\sigma_1)}\notag\\
&\times\int\limits_{0}^{1}t^{\sigma_1-1}(1-t)^{\sigma_4-\sigma_1-1}(1-xt)^{-\sigma_2}(1-yt)^{-\sigma_3}dt
\end{align}
$\Re(\sigma_4)>\Re(\sigma_1)>0$, $|\arg(1-x)|<\pi$ and $|\arg(1-y)|<\pi$.\\

Chaudhry et al. \cite{Chaudhry1997} introduced the  extended Beta function is defined by
\begin{eqnarray}\label{Ebeta}
B(\sigma_1,\sigma_2;p)=B_p(\sigma_1,\sigma_2)=\int\limits_{0}^{1}t^{\sigma_1-1}(1-t)^{\sigma_2-1}
e^{-\frac{p}{t(1-t)}}dt
\end{eqnarray}
(where $\Re(p)>0, \Re(\sigma_1)>0, \Re(\sigma_2)>0$) respectively. When $p=0$, then $B(\sigma_1,\sigma_2;0)=B(\sigma_1,\sigma_2)$.\\
The extended  hypergeometric  function introduced in \cite{Chaudhrya2004} by using the definition of extended Beta function $B_p(\delta_1,\delta_2)$ as follows:
\begin{eqnarray}\label{Ehyper}
F_p(\sigma_1,\sigma_2;\sigma_3;z)=\sum\limits_{n=0}^{\infty}\frac{B_p(\sigma_2+n, \sigma_3-\sigma_2)}{B(\sigma_2, \sigma_3-\sigma_2)}(\sigma_1)_n\frac{z^n}{n!},
\end{eqnarray}
where $p\geq0$ and $\Re(\sigma_3)>\Re(\sigma_2)>0$, $|z|<1$.\\
In the same paper, they defined the following integral representations of extended hypergeometric and confluent hypergeometric functions as
\begin{align}\label{IEhyper}
F_p(\sigma_1,\sigma_2;\sigma_3;z)&=\frac{1}{B(\sigma_2, \sigma_3-\sigma_2)}\notag\\
&\times\int_0^1t^{\sigma_2-1}(1-t)^{\sigma_3-\sigma_2-1}(1-zt)^{-\sigma_1}\exp\Big(\frac{-p}{t(1-t)}\Big)dt,
\end{align}
$$\Big(p\geq0, \Re(\sigma_3)>\Re(\sigma_2)>0, |\arg(1-z)|<\pi\Big).$$
The extended Appell's function is defined by (see \cite{Ozerslan})
\begin{eqnarray}\label{EAppell}
F_1(\sigma_1,\sigma_2,\sigma_3;\sigma_4;x,y;p)=\sum\limits_{m,n=0}^{\infty}\frac{B_p(\sigma_1+m+n, \sigma_4-\sigma_1)}{B(\sigma_1, \sigma_4-\sigma_1)}(\sigma_2)_m(\sigma_3)_n\frac{x^my^n}{m!n!}
\end{eqnarray}
where $p\geq0$ and $\Re(\sigma_4)>\Re(\sigma_1)>0$ and $|x|,|y|<1$.\\
 \"{O}zerslan and  \"{O}zergin \cite{Ozerslan} defined its integral representation by
\begin{align}\label{IEAppell}
F_1(\sigma_1,\sigma_2,\sigma_3;\sigma_4;z;p)&=\frac{1}{B(\sigma_1, \sigma_4-\sigma_1)}\notag\\
&\times\int_0^1t^{\sigma_1-1}(1-t)^{\sigma_4-\sigma_1-1}(1-xt)^{-\sigma_2}(1-yt)^{-\sigma_3}
\exp\Big(\frac{-p}{t(1-t)}\Big)dt,\\
&\Big(\Re(p)>0, \Re(\sigma_4)>\Re(\sigma_1)>0, |\arg(1-x)|<\pi, |\arg(1-y)|<\pi\Big)\notag.
\end{align}

It is clear that when $p=0$, then the equations (\ref{Ehyper})-(\ref{IEAppell}) reduce to the well known hypergeometric, confluent hypergeometric and Appell's series and their integral representation respectively.\\
Very recently Shadab et al.  \cite{Choi2017} introduced a new and modified extension of Beta function as:
\begin{eqnarray}\label{Cbeta}
B^{\alpha}_{p}(\sigma_1,\sigma_2)=\int_0^1t^{\sigma_1-1}(1-t)^{\sigma_2-1}E_{\alpha}\Big(-\frac{p}{t(1-t)}\Big)dt,
\end{eqnarray}
where $\Re(\sigma_1)>0$, $\Re(\sigma_2)>0$ and $E_{\alpha}\Big(.\Big)$ is the Mittag-Leffler function defined by
\begin{eqnarray}
E_{\alpha}\Big(z\Big)=\sum_{n=0}^\infty\frac{z^n}{\Gamma(\alpha n+1)}.
\end{eqnarray}
Obviously, when $\alpha=1$  then $B^{1}_{p}(x,y)=B_p(x,y)$ is the extended Beta function (see\cite{Chaudhry1997}). Similarly, when when $\alpha=1$ and $p=0$,  then $B^{1}_{0}(x,y)=B_0(x,y)$ is the classical Beta function.\\
Shadab et al.  \cite{Choi2017} also defined extended hypergeometric function and its integral representation
\begin{align}\label{pqhyper}
F_{p}^{\alpha}(\sigma_1,\sigma_2;\sigma_3;z)={}_2F_{1}\Big(\sigma_1,\sigma_2;\sigma_3;z;p,\alpha\Big)\notag&=
\sum_{n=0}^\infty(\sigma_1)_n\frac{B_{p}^{\alpha}(\sigma_2+n,\sigma_3-\sigma_2)}{B(\sigma_2,\sigma_3-\sigma_2)}
\frac{z^n}{n!}\notag\\
=&\sum_{n=0}^\infty(\sigma_1)_n\frac{B(\sigma_2+n,\sigma_3-\sigma_2; p,\alpha)}{B(\sigma_2,\sigma_3-\sigma_2)}
\frac{z^n}{n!}
\end{align}
where $p,\alpha\geq0$, $\sigma_1,\sigma_2,\sigma_3\in\mathbb{C}$  and $|z|<1$.
 \begin{align}\label{pqIhyper}
F_{p}^{\alpha}(\sigma_1,\sigma_2;\sigma_3;z)&=\frac{1}{\beta(\sigma_2;\sigma_3-\sigma_2)}\notag\\
&\times\int_0^1t^{\sigma_2-1}
(1-t)^{\sigma_3-\sigma_2-1}(1-tz)^{-\sigma_1}E_{\alpha}\Big(-\frac{p}{t(1-t)}\Big)dt,
\end{align}
where $\Re(p)>0$, $\Re(\alpha)>0$,  $\Re(\sigma_3)>\Re(\sigma_2)>0$. Obviously when $\alpha=1$, then the hypergeometric function (\ref{pqhyper}) will reduce to the extended hypergeometric function (\ref{Ehyper}) and similarly when $\alpha=1$ and $p=0$ then the hypergeometric function (\ref{pqhyper}) will reduce to the  hypergeometric function (\ref{Hyper}).\\

For various extensions and generalizations of Beta function and hypergeometric functions the interested readers may refer to the recent work of researchers (see e. g., \cite{Choi2014,Mubeen2017,Ozerslan,Ozergin}).
\section{ extension of Appell's functions and its integral representations}

We start the section by deriving the relation of \eqref{Cbeta} with multi-index Mittag-Leffler function \cite{Kir1} as follows:

\begin{proposition}
For $\Re(p), \Re(\sigma_1), \Re(\sigma_2), \Re(\alpha) > 0$ the following relation holds true:
\begin{align}
B^{\alpha}_{p}(\sigma_1,\sigma_2)=\frac{\pi sin\pi(\sigma_1+\sigma_2)}{(sin\pi\sigma_1)(sin\pi\sigma_2)}E_{(\alpha,1), (1, 1-\sigma_1), (1,1-\sigma_2)}^{(2,1-\sigma_1-\sigma_2)}(-p)
\end{align}
where $E_{(\alpha,1), (1, 1-\sigma_1), (1,1-\sigma_2)}^{(2,1-\sigma_1-\sigma_2)}(-p)$ is the multi-index Mittag-Leffler function \cite{Kir1}.
\end{proposition}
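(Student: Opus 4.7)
The plan is to expand the Mittag--Leffler kernel inside the integral defining $B^{\alpha}_{p}(\sigma_1,\sigma_2)$ as a power series in $p$, swap sum and integral, and then match the resulting series of Gamma quotients with Kiryakova's multi-index Mittag--Leffler function by means of the reflection formula $\Gamma(z)\Gamma(1-z)=\pi/\sin\pi z$.

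More precisely, I would first write
\[
E_{\alpha}\!\Big(-\tfrac{p}{t(1-t)}\Big)=\sum_{n=0}^{\infty}\frac{(-p)^{n}}{\Gamma(\alpha n+1)}\,t^{-n}(1-t)^{-n},
\]
plug this into \eqref{Cbeta} and exchange summation and integration. Each resulting integral is a formal Beta integral
\[
\int_{0}^{1}t^{\sigma_1-n-1}(1-t)^{\sigma_2-n-1}\,dt=B(\sigma_1-n,\sigma_2-n)=\frac{\Gamma(\sigma_1-n)\,\Gamma(\sigma_2-n)}{\Gamma(\sigma_1+\sigma_2-2n)},
\]
so that, as an identity between analytic functions of $\sigma_1,\sigma_2$,
\[
B^{\alpha}_{p}(\sigma_1,\sigma_2)=\sum_{n=0}^{\infty}\frac{(-p)^{n}\,\Gamma(\sigma_1-n)\,\Gamma(\sigma_2-n)}{\Gamma(\alpha n+1)\,\Gamma(\sigma_1+\sigma_2-2n)}.
\]

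Next I would transform each Gamma with a negative shift via the reflection formula. Using $\sin\pi(\sigma_j-n)=(-1)^{n}\sin\pi\sigma_j$ for $j=1,2$ and $\sin\pi(\sigma_1+\sigma_2-2n)=\sin\pi(\sigma_1+\sigma_2)$, one obtains
\[
\Gamma(\sigma_j-n)=\frac{(-1)^{n}\pi}{\sin\pi\sigma_j\,\Gamma(1-\sigma_j+n)},\qquad \Gamma(\sigma_1+\sigma_2-2n)=\frac{\pi}{\sin\pi(\sigma_1+\sigma_2)\,\Gamma(1-\sigma_1-\sigma_2+2n)}.
\]
Substituting these and noting that $(-1)^{2n}=1$ in the numerator gives
\[
B^{\alpha}_{p}(\sigma_1,\sigma_2)=\frac{\pi\sin\pi(\sigma_1+\sigma_2)}{\sin\pi\sigma_1\,\sin\pi\sigma_2}\sum_{n=0}^{\infty}\frac{\Gamma(2n+1-\sigma_1-\sigma_2)}{\Gamma(\alpha n+1)\,\Gamma(n+1-\sigma_1)\,\Gamma(n+1-\sigma_2)}\,(-p)^{n}.
\]
Finally, I would identify the remaining series with $E^{(2,1-\sigma_1-\sigma_2)}_{(\alpha,1),(1,1-\sigma_1),(1,1-\sigma_2)}(-p)$ by reading off the three pairs of index parameters $(\alpha_i,\beta_i)$ that produce the denominator Gammas $\Gamma(\alpha n+1),\ \Gamma(n+1-\sigma_1),\ \Gamma(n+1-\sigma_2)$ and the numerator pair $(2,1-\sigma_1-\sigma_2)$ producing $\Gamma(2n+1-\sigma_1-\sigma_2)$, consistently with Kiryakova's definition in \cite{Kir1}.

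The main obstacle is the legitimacy of the termwise integration: for every $n$ large enough, the monomial integrand $t^{\sigma_1-n-1}(1-t)^{\sigma_2-n-1}$ is not integrable on $[0,1]$, so the individual Beta integrals are only meant in the sense of analytic continuation. The cleanest way to bypass this is to invoke the Mellin--Barnes representation
\[
E_{\alpha}(-x)=\frac{1}{2\pi i}\int_{(c)}\frac{\Gamma(s)\Gamma(1-s)}{\Gamma(1-\alpha s)}\,x^{-s}\,ds,
\]
substitute $x=p/[t(1-t)]$, perform the now convergent interchange with $\int_{0}^{1}t^{\sigma_1+s-1}(1-t)^{\sigma_2+s-1}\,dt=B(\sigma_1+s,\sigma_2+s)$, and then collect residues at $s=-n$; this produces the same Gamma series rigorously, and the reflection step above completes the identification.
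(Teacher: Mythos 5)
Your proposal is correct and follows essentially the same route as the paper: expand $E_{\alpha}$ as a power series, integrate term by term to get $B(\sigma_1-n,\sigma_2-n)=\Gamma(\sigma_1-n)\Gamma(\sigma_2-n)/\Gamma(\sigma_1+\sigma_2-2n)$, and apply the reflection formula to match Kiryakova's multi-index Mittag--Leffler series. Your additional observation that the termwise Beta integrals diverge for $n>\min(\Re\sigma_1,\Re\sigma_2)$ and must be read via analytic continuation (or justified through the Mellin--Barnes representation) is a genuine point of rigor that the paper's proof passes over silently.
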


{\bf Proof}
Using the definition of Beta function and reduction theorem of Gamma function, we get
\begin{align*}
B^{\alpha}_{p}(\sigma_1,\sigma_2)&=\sum_{n=0}^{\infty}\frac{(-p)^n}{\Gamma(\alpha n+1)}\int_{0}^{1}t^{\sigma_1-1}(1-t)^{\sigma_2-1}\frac{1}{t^n(1-t)^n}dt\\
&=\sum_{n=0}^{\infty}\frac{(-p)^n}{\Gamma(\alpha n+1)}\int_{0}^{1}t^{\sigma_1-n-1}(1-t)^{\sigma_2-n-1}dt\\
&=\sum_{n=0}^{\infty}\frac{(-p)^n}{\Gamma(\alpha n+1)}B(\sigma_1-n, \sigma_2-n)\\
&=\sum_{n=0}^{\infty}\frac{(-p)^n}{\Gamma(\alpha n+1)}\frac{\Gamma(\sigma_1-n)\Gamma(\sigma_2-n)}{\Gamma(\sigma_1+\sigma_2-2n)}\\
&=\frac{\Gamma(-\sigma_1)\Gamma(1+\sigma_1)\Gamma(-\sigma_2)\Gamma(1+\sigma_2)}{\Gamma(-\sigma_1-\sigma_2)\Gamma(1+\sigma_1+\sigma_2)}\\
&\times \sum_{n=0}^{\infty} \frac{\Gamma(2n+1-\sigma_1-\sigma_2)(-p)^n}{\Gamma(\alpha n+1)\Gamma(n+1-\sigma_1)\Gamma(n+1-\sigma_2)}
\end{align*}
Now, using the Euler's reflection formula on Gamma function,
\begin{align}\label{rel1}
{\Gamma(r)\Gamma(1-r)}=\frac{\pi}{\sin(\pi r)},
\end{align}
we get the desired result.

Next, we used the definition (\ref{Cbeta}) and consider the following modified extension Appell's  functions.
\begin{definition}\label{def2}
The modified extended Appell's function $F_1$ is defined by
\begin{align}\label{Appell}
F_{1,p}^{\alpha}(\sigma_1,\sigma_2,\sigma_3;\sigma_4;x,y)&=
F_{1}(\sigma_1,\sigma_2,\sigma_3;\sigma_4;x,y;p,\alpha)\notag\\&=\sum_{m,n=0}^\infty(\sigma_2)_m(\sigma_3)_n
\frac{B_{p}^{\alpha}(\sigma_1+m+n,\sigma_4-\sigma_1)}{B(\sigma_1,\sigma_4-\sigma_1)}
\frac{x^m}{m!}\frac{y^n}{n!}\notag\\
=&\sum_{m,n=0}^\infty(\sigma_2)_m(\sigma_3)_n\frac{B(\sigma_1+m+n,\sigma_4-\sigma_1; p,\alpha)}{B(\sigma_1,\sigma_4-\sigma_1)}
\frac{x^m}{m!}\frac{y^n}{n!}
\end{align}
where $p,\alpha\geq0$, $\sigma_1,\sigma_2,\sigma_3,\sigma_4\in\mathbb{C}$ and  $|x|<1$, $|y|<1$.
\end{definition}
\begin{remark}
Setting  $\alpha=1$ in (\ref{Appell}), then we get the extended Appell's functions (see \cite{Ozerslan}).
\end{remark}
Now, we derive the following proposition
\begin{proposition} For $p,q >0$, $0<x, y<\frac{1}{2}$, the following inequality holds true:
\begin{align}
B_{p,q}(x,y) \leq (2p)^{\frac{2x-1}{2}}(2q)^{\frac{2y-1}{2}}\sqrt{\Gamma(-2x+1,2p)\Gamma(-2y+1,2q)}.
\end{align}
\end{proposition}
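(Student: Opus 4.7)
The proof will be a direct application of the Cauchy--Schwarz inequality followed by a change of variable that identifies each factor as an upper incomplete Gamma function.

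The plan is to start from the integral representation
\begin{equation*}
B_{p,q}(x,y)=\int_{0}^{1}t^{x-1}(1-t)^{y-1}\exp\!\Big(-\frac{p}{t}-\frac{q}{1-t}\Big)\,dt,
\end{equation*}
and split the integrand as the product
\begin{equation*}
\bigl[t^{x-1}e^{-p/t}\bigr]\cdot\bigl[(1-t)^{y-1}e^{-q/(1-t)}\bigr].
\end{equation*}
Applying the Cauchy--Schwarz inequality on $[0,1]$ with these two factors yields
\begin{equation*}
B_{p,q}(x,y)\le \Bigl(\int_{0}^{1}t^{2x-2}e^{-2p/t}\,dt\Bigr)^{1/2}\Bigl(\int_{0}^{1}(1-t)^{2y-2}e^{-2q/(1-t)}\,dt\Bigr)^{1/2}.
\end{equation*}

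Next I would evaluate each factor in closed form. For the first, substitute $u=2p/t$, so that $dt=-(2p/u^{2})\,du$ and the limits $t\in(0,1)$ become $u\in(2p,\infty)$. A routine calculation gives
\begin{equation*}
\int_{0}^{1}t^{2x-2}e^{-2p/t}\,dt=(2p)^{2x-1}\int_{2p}^{\infty}u^{-2x}e^{-u}\,du=(2p)^{2x-1}\,\Gamma(-2x+1,\,2p),
\end{equation*}
where $\Gamma(a,z)=\int_{z}^{\infty}u^{a-1}e^{-u}\,du$ is the upper incomplete Gamma function. The substitution $s=1-t$ in the second factor produces the analogous identity
\begin{equation*}
\int_{0}^{1}(1-t)^{2y-2}e^{-2q/(1-t)}\,dt=(2q)^{2y-1}\,\Gamma(-2y+1,\,2q).
\end{equation*}
Taking square roots and multiplying the two resulting expressions gives exactly the bound claimed in the statement.

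There is no real obstacle here: the Cauchy--Schwarz split is the only natural way to decouple the two singular factors $e^{-p/t}$ and $e^{-q/(1-t)}$, and the substitution $u=2p/t$ (respectively $u=2q/(1-t)$) is forced by the shape of the resulting one-parameter integrals. The hypotheses $p,q>0$ guarantee convergence of the upper incomplete Gamma functions, while the range $0<x,y<\tfrac{1}{2}$ ensures that the exponents $2x-2$ and $2y-2$ in the squared integrands are strictly less than $-1$, so the passage to the upper incomplete Gamma function at arguments $-2x+1,-2y+1>0$ is the standard one. The only point that deserves a line of comment in the write-up is the justification that Cauchy--Schwarz applies, i.e.\ that both squared integrals are finite under the stated hypotheses, which is immediate from the exponential decay at the endpoints.
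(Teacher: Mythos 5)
Your proposal is correct and follows essentially the same route as the paper: Cauchy--Schwarz applied to the factorization $t^{x-1}e^{-p/t}\cdot(1-t)^{y-1}e^{-q/(1-t)}$, followed by a change of variables identifying each squared integral as $(2p)^{2x-1}\Gamma(-2x+1,2p)$ and $(2q)^{2y-1}\Gamma(-2y+1,2q)$. In fact your write-up is cleaner than the paper's, which contains typographical slips in the intermediate substituted integrals that your single substitution $u=2p/t$ avoids.
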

\begin{proof}
Applying the Cauchy-Schwarz integral inequality, we obtain:
\begin{align*}
B_{p,q}(x,y)&=\int_{0}^{1}t^{x-1}(1-t)^{y-1}e^{-\frac{p}{t}}e^{-\frac{q}{t}} dt\\
&\leq	\left(\int_{0}^{1}(t^{x-1}e^{-p/t})^2dt\right)^{1/2}\left(\int_{0}^{1}[(1-t)^{y-1}e^{-q/1-t}]^2dt\right)^{1/2}\\
&=\left(\int_{1}^{\infty}t^{-2x}e^{-2 pt}dt\right)^{1/2}\left(\int_{1}^{\infty}(1-t)^{-2y}e^{-2 qt}dt\right)^{1/2}\\
&=\left[(2p)^{2x-1}\Gamma(-2x+1, 2p)\right]^{1/2}\left[(2q)^{2y-1}\Gamma(-2y+1, 2q)\right]^{1/2}\\
&=(2p)^{\frac{2x-1}{2}}(2q)^{\frac{2y-1}{2}}\sqrt{\Gamma(-2x+1,2p)\Gamma(-2y+1,2q)},
\end{align*}
where $\Gamma (x,y)$ is the incomplete Gamma function.
\end{proof}

\begin{theorem}\label{th2}
The following integral representation holds true for (\ref{Appell})
\begin{align}\label{pqIhyper2}
F_1\Big(\sigma_1,\sigma_2,\sigma_3;\sigma_4;x,y;p,\alpha\Big)&=\frac{1}{B(\sigma_1;\sigma_4-\sigma_1)}
\int_0^1t^{\sigma_1-1}(1-t)^{\sigma_4-\sigma_1-1}
(1-tx)^{-\sigma_2}(1-ty)^{-\sigma_3}\notag\\&\times
E_{\alpha}\Big(-\frac{p}{t(1-t)}\Big)dt,
\end{align}
where $\Re(p)>0$, $\Re(\alpha)>0$,  $\Re(\sigma_4)>\Re(\sigma_1)>0$.
\end{theorem}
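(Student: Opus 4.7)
The plan is to start from the series definition of $F_1(\sigma_1,\sigma_2,\sigma_3;\sigma_4;x,y;p,\alpha)$ given in Definition \ref{def2}, replace the modified extended Beta $B_p^{\alpha}(\sigma_1+m+n,\sigma_4-\sigma_1)$ in each term by its defining integral (\ref{Cbeta}), interchange the order of summation and integration, and then recognize the two inner series as binomial expansions. Concretely, I would substitute
\begin{align*}
B_{p}^{\alpha}(\sigma_1+m+n,\sigma_4-\sigma_1)=\int_{0}^{1}t^{\sigma_1+m+n-1}(1-t)^{\sigma_4-\sigma_1-1}E_{\alpha}\!\left(-\tfrac{p}{t(1-t)}\right)dt
\end{align*}
into the defining double series, factor out the $t^{\sigma_1-1}(1-t)^{\sigma_4-\sigma_1-1}E_{\alpha}(-p/t(1-t))$ from the integrand, and pull the sums inside.

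After the interchange the double sum decouples into two independent one-dimensional sums:
\begin{align*}
\sum_{m=0}^{\infty}(\sigma_2)_m\frac{(tx)^m}{m!}\quad\text{and}\quad\sum_{n=0}^{\infty}(\sigma_3)_n\frac{(ty)^n}{n!}.
\end{align*}
Both are standard generalized binomial series, equal to $(1-tx)^{-\sigma_2}$ and $(1-ty)^{-\sigma_3}$ respectively, valid because $t\in[0,1]$ together with $|x|<1$, $|y|<1$ guarantees $|tx|<1$ and $|ty|<1$. Plugging these closed forms back in gives exactly the right-hand side of (\ref{pqIhyper2}).

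The only nontrivial step is justifying the Fubini-type interchange of $\sum_{m,n}$ with $\int_0^1$. To handle this I would observe that on $t\in[0,1]$ the factor $E_{\alpha}(-p/t(1-t))$ is bounded (indeed vanishes at the endpoints since $\Re(p)>0$), and that the absolute majorant
\begin{align*}
\sum_{m,n\ge0}|(\sigma_2)_m||(\sigma_3)_n|\,\frac{|x|^m|y|^n}{m!\,n!}\,t^{\Re(\sigma_1)+m+n-1}(1-t)^{\Re(\sigma_4-\sigma_1)-1}\,\bigl|E_{\alpha}(-p/t(1-t))\bigr|
\end{align*}
integrates to a finite quantity bounded by $(1-|x|)^{-\Re(\sigma_2)}(1-|y|)^{-\Re(\sigma_3)}B(\Re(\sigma_1),\Re(\sigma_4-\sigma_1))\cdot\sup_t|E_{\alpha}|$, since $\Re(\sigma_4)>\Re(\sigma_1)>0$ makes the beta integral convergent. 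This legitimizes the swap by dominated convergence (or Tonelli), after which the remaining manipulations are purely algebraic. I expect this absolute-convergence bookkeeping to be the main, though routine, obstacle; once it is in place, the identity falls out in one line.
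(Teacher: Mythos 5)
Your proposal is correct and follows essentially the same route as the paper: substitute the integral (\ref{Cbeta}) into the series (\ref{Appell}), interchange summation and integration, and collapse the decoupled sums via the binomial expansions $(1-tx)^{-\sigma_2}$ and $(1-ty)^{-\sigma_3}$. The only difference is that you supply the dominated-convergence justification for the interchange, which the paper omits.
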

\begin{proof}
Using the definition (\ref{Cbeta}) in (\ref{Appell}), we have
\begin{align}\label{PS2}
F_1\Big(\sigma_1,\sigma_2,\sigma_3;\sigma_4;x,y;p,\alpha\Big)&=\frac{1}{B(\sigma_1;\sigma_4-\sigma_1)}
\int_0^1t^{\sigma_1-1}(1-t)^{\sigma_4-\sigma_1-1}\notag\\
\times&\Big(\sum_{m,n=0}^\infty\frac{(\sigma_2)_m(\sigma_3)_n(tx)^m(ty)^n}{m!n!}\Big) dt.
\end{align}
 Since
\begin{align}\label{PS}
\sum_{m,n=0}^\infty\frac{(\sigma_2)_m(\sigma_3)_n(x)^m(y)^n}{m!n!}=(1-tx)^{-\sigma_2}(1-ty)^{-\sigma_3}.
\end{align}

Thus by using (\ref{PS}) in (\ref{PS2}),   we get the desired result.
\end{proof}
\section{Extension of fractional derivative operator}
In this section, we define a new and modified extension of   Riemann-Liouville fractional derivative and obtain its related results.
\begin{definition}\label{frac}
\begin{eqnarray}\label{eq7}
\mathfrak{D}_{z;p}^{\mu;\alpha}\{f(z)\}=\frac{1}{\Gamma(-\mu)}\int_0^zf(t)(z-t)^{-\mu-1}E_\alpha\Big(-\frac{pz^2}{t(z-t)}\Big)dt, \Re(\mu)<0.
\end{eqnarray}
For the case $m-1<\Re(\mu)<m$ where $m=1,2,\cdots$, it follows
\begin{align}\label{eq8}
\mathfrak{D}_{z;p}^{\mu;\alpha}\{f(z)\}&=\frac{d^m}{dz^m}\mathfrak{D}_{z;p}^{\mu-m;\alpha}\Big\{f(z)\Big\}\notag\\
=&\frac{d^m}{dz^m}\Big\{\frac{1}{\Gamma(-\mu+m)}\int_0^zf(t)(z-t)^{-\mu+m-1}E_\alpha\Big(-\frac{pz^2}{t(z-t)}\Big)dt\Big\}.
\end{align}
\end{definition}
\begin{remark}
Obviously if  $\alpha=1$, then (\ref{eq7}) and (\ref{eq8}) respectively reduces to the extended fractional derivative (\ref{Ofrac}) and (\ref{Ofrac1}). Similarly, if we set $\alpha=1$ and $p=0$ we get (\ref{eq1}) and (\ref{eq2}).
\end{remark}
Now, we prove some theorems involving the modified extension of fractional derivative operator.
\begin{theorem} \label{th1} The following formula hold true,
\begin{eqnarray}\label{fd1}
\mathfrak{D}_{z}^{\mu}\{z^{\eta};p,\alpha\}=\frac{B_{p}^{\alpha}(\eta+1,-\mu)}{\Gamma(-\mu)}z^{\eta-\mu},
\Re(\mu)<0.
\end{eqnarray}
\end{theorem}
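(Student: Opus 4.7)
The plan is to prove (\ref{fd1}) by inserting $f(t)=t^\eta$ into the defining formula (\ref{eq7}) and performing the single change of variable $t = zu$ that maps the interval $[0,z]$ onto $[0,1]$. The whole point of the definition is that the Mittag-Leffler kernel carries the factor $z^2$ in its numerator precisely to make this rescaling clean: since $t(z-t) = z^2 u(1-u)$, the $z^2$ cancels and we are left with the $p/(u(1-u))$ kernel that appears in the definition (\ref{Cbeta}) of $B_p^\alpha$.

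Concretely, after substitution the integrand factors as
\[
t^\eta (z-t)^{-\mu-1} E_\alpha\!\Bigl(-\tfrac{pz^2}{t(z-t)}\Bigr)\, dt = z^{\eta-\mu}\, u^\eta (1-u)^{-\mu-1} E_\alpha\!\Bigl(-\tfrac{p}{u(1-u)}\Bigr)\, du,
\]
where I have absorbed the Jacobian $dt = z\, du$ together with $t^\eta = z^\eta u^\eta$ and $(z-t)^{-\mu-1} = z^{-\mu-1}(1-u)^{-\mu-1}$. Pulling the factor $z^{\eta-\mu}/\Gamma(-\mu)$ outside the integral and recognizing
\[
\int_0^1 u^{(\eta+1)-1}(1-u)^{(-\mu)-1} E_\alpha\!\Bigl(-\tfrac{p}{u(1-u)}\Bigr)\, du = B_p^\alpha(\eta+1,-\mu)
\]
by (\ref{Cbeta}) immediately produces (\ref{fd1}).

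There is essentially no obstacle here; the proof is a one-line change of variables, and the only thing that deserves a comment is the range of parameters. One needs $\Re(\eta) > -1$ for integrability of the $u^\eta$ factor at $u=0$, and $\Re(\mu) < 0$ (which is exactly the hypothesis of the theorem) for the $(1-u)^{-\mu-1}$ factor at $u=1$ in the $p\to 0$ limit; both are the standard conditions built into $B_p^\alpha$. If one wished to extend the identity to $m-1 < \Re(\mu) < m$, one would simply apply formula (\ref{eq8}) to the already-proved base case and use (\ref{eq3})-type differentiation $m$ times in $z$, but the statement as given requires only the substitution argument above.
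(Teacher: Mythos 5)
Your proof is correct and is essentially identical to the paper's own argument: substitute $f(t)=t^\eta$ into the definition, change variables $t=uz$, and identify the resulting integral with $B_p^\alpha(\eta+1,-\mu)$ via the definition of the extended Beta function. Your added remark on the condition $\Re(\eta)>-1$ is a useful precision the paper omits.
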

\begin{proof}
From (\ref{frac}), we have
\begin{eqnarray}\label{fd2}
\mathfrak{D}_{z}^{\mu}\{z^\eta;p,\alpha\}=\frac{1}{\Gamma(-\mu)}\int_0^zt^\eta(z-t)^{-\mu-1}\,
E_\alpha\Big(-\frac{pz^2}{t(z-t)}\Big) dt.
\end{eqnarray}
Substituting $t=uz$ in (\ref{fd2}), we get
\begin{eqnarray*}
\mathfrak{D}_{z}^{\mu}\{z^\eta;p,\alpha\}&=&\frac{1}{\Gamma(-\mu)}\int_0^1(uz)^\eta(z-uz)^{-\mu-1}\,
E_\alpha\Big(-\frac{pz^2}{uz(z-uz)}\Big) dt\\
&=&\frac{z^{\eta-\mu}}{\Gamma(-\mu)}\int_0^1u^\eta(1-u)^{-\mu-1}\,
E_\alpha\Big(-\frac{p}{u(1-u)}\Big) dt,
\end{eqnarray*}
In view of (\ref{Cbeta}) to the above equation, we get the required result.
\end{proof}
\begin{theorem}\label{th2a}
Let $\Re(\mu)>0$ and assume that the function $f(z)$ is analytic at the origin with its Maclaurin expansion given by
$f(z)=\sum_{n=0}^{\infty} a_n z^n$ where $|z|<\delta$ for some $\delta\in \mathbb{R^+}$. Then
\begin{align}
\mathfrak{D}_{z;p}^{\mu;\alpha}\{f(z)\}=\mathfrak{D}_{z}^{\mu}\{f(z);p,\alpha\}&=\sum_{n=0}^\infty a_n\mathfrak{D}_{z}^{\mu}\{z^n;p,\alpha\}\notag\\
=&\frac{1}{\Gamma(-\mu)}\sum_{n=0}^\infty a_nB_p^\alpha(n+1,-\mu)z^{n-\mu}.
\end{align}
\end{theorem}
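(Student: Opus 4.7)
The plan is to substitute the Maclaurin expansion $f(t)=\sum_{n=0}^{\infty}a_{n}t^{n}$ into the integral defining $\mathfrak{D}_{z;p}^{\mu;\alpha}\{f(z)\}$ in (\ref{eq7}), interchange summation and integration, and then evaluate each resulting monomial integral using Theorem \ref{th1}. (Although the statement reads $\Re(\mu)>0$, the relevant regime for the direct integral representation in (\ref{eq7}) — and for the closed form $B_{p}^{\alpha}(n+1,-\mu)/\Gamma(-\mu)$ — is $\Re(\mu)<0$; the general case would follow by combining the same argument with the differential definition (\ref{eq8}).) Formally one writes
\[
\mathfrak{D}_{z;p}^{\mu;\alpha}\{f(z)\}=\frac{1}{\Gamma(-\mu)}\int_{0}^{z}\Bigl(\sum_{n=0}^{\infty}a_{n}t^{n}\Bigr)(z-t)^{-\mu-1}E_{\alpha}\!\Bigl(-\frac{pz^{2}}{t(z-t)}\Bigr)dt,
\]
swaps the sum and integral, recognises each term as $a_{n}\,\mathfrak{D}_{z}^{\mu}\{t^{n};p,\alpha\}$, and applies (\ref{fd1}) to land on $\frac{1}{\Gamma(-\mu)}\sum_{n=0}^{\infty}a_{n}B_{p}^{\alpha}(n+1,-\mu)z^{n-\mu}$.

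The only step that needs genuine justification — and the main obstacle — is the interchange of summation and integration. I would fix $z$ with $|z|<\delta$ and argue as follows. The Maclaurin series $f(t)=\sum a_{n}t^{n}$ converges uniformly on the closed segment from $0$ to $z$, because this segment lies strictly inside the disc of convergence. The remaining kernel $(z-t)^{-\mu-1}E_{\alpha}\bigl(-pz^{2}/(t(z-t))\bigr)$ is absolutely integrable on $(0,z)$: the potential singularities are only at the endpoints, and at both $t\to 0^{+}$ and $t\to z^{-}$ the argument of $E_{\alpha}$ tends to $-\infty$, so the standard asymptotic $E_{\alpha}(-x)=O(x^{-1})$ as $x\to+\infty$ (valid for $0<\alpha<2$) suppresses the algebraic singularity at $t=z$ coming from $(z-t)^{-\mu-1}$ as well as any growth near $t=0$. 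Uniform boundedness of the partial sums $S_{N}(t)=\sum_{n\le N}a_{n}t^{n}$ on the segment, together with this integrable dominating factor, then permits interchange by the dominated convergence theorem.

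With the interchange secured, each individual term reads
\[
\frac{a_{n}}{\Gamma(-\mu)}\int_{0}^{z}t^{n}(z-t)^{-\mu-1}E_{\alpha}\!\Bigl(-\frac{pz^{2}}{t(z-t)}\Bigr)dt=a_{n}\,\mathfrak{D}_{z}^{\mu}\{t^{n};p,\alpha\},
\]
and Theorem \ref{th1} evaluates this in closed form as $a_{n}B_{p}^{\alpha}(n+1,-\mu)z^{n-\mu}/\Gamma(-\mu)$. Summing over $n$ yields both equalities in the statement. The case $p=0$ can be handled as a sanity check (the Mittag-Leffler factor becomes the constant $1$ and one recovers the classical Riemann-Liouville result), and one should note that convergence of the resulting series $\sum a_{n}B_{p}^{\alpha}(n+1,-\mu)z^{n-\mu}$ follows from the same uniform-convergence argument together with the obvious bound $|B_{p}^{\alpha}(n+1,-\mu)|\le B(n+1,\Re(-\mu))\cdot\sup_{t\in[0,1]}|E_{\alpha}(-p/(t(1-t)))|$.
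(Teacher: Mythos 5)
Your proof follows essentially the same route as the paper's: substitute the Maclaurin expansion into the integral definition, integrate term by term using uniform convergence of the series on the segment from $0$ to $z$, and apply Theorem~\ref{th1} to each monomial. Your extra care in justifying the interchange (integrability of the kernel via the decay of $E_{\alpha}$ at the endpoints) and your observation that the stated hypothesis $\Re(\mu)>0$ should really be $\Re(\mu)<0$ — the paper's own final displayed line indeed carries the condition $\Re(\mu)<0$ — go beyond the paper's more cursory justification, but the argument is the same.
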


\begin{proof}
Using the series expansion of the function $f(z)$ in (\ref{eq7}) gives
\begin{eqnarray*}
\mathfrak{D}_{z}^{\mu}\{f(z);p,\alpha\}=\frac{1}{\Gamma(-\mu)}\int_0^z\sum_{n=0}^\infty a_nt^n(z-t)^{-\mu-1}\,
E_\alpha\Big(-\frac{pz^2}{t(z-t)}\Big) dt.
\end{eqnarray*}
The series is uniformly convergent on any closed disk centered at the origin with its radius smaller than $\delta$, so does on the line segment from $0$ to a fixed $z$ for $|z|<\delta$. Thus it guarantee terms by terms integration as follows
\begin{eqnarray*}
\mathfrak{D}_{z}^{\mu}\{f(z);p,\alpha\}&=&\sum_{n=0}^\infty a_n\Big\{\frac{1}{\Gamma(-\mu)}\int_0^z t^n(z-t)^{-\mu-1}\,
E_\alpha\Big(-\frac{pz^2}{uz(z-uz)}\Big) dt\\
&=&\sum_{n=0}^\infty a_n\mathfrak{D}_{z}^{\mu}\{z^n;p,\alpha\}.
\end{eqnarray*}
Now, applying Theorem \ref{th1}, we get
\begin{eqnarray*}
\mathfrak{D}_{z}^{\mu}\{f(z);p,\alpha\}
&=&\frac{1}{\Gamma(-\mu)}\sum_{n=0}^\infty a_nB_p^\alpha(n+1,-\mu)z^{n-\mu}, \Re(\mu)<0.
\end{eqnarray*}
which is the required proof.
\end{proof}
%
\begin{example}
The following result holds true:
\begin{align}
\mathfrak{D}_{z}^{\mu}\{e^z;p,\alpha\}=\frac{z^{-\mu}}{\Gamma(-\mu)}\sum_{n=0}^\infty B_p^\alpha(n+1,-\mu)\frac{z^n}{n!}.
\end{align}
Using the power series of $\exp(z)$ and applying Theorem \ref{th2a}, we have
\begin{align}
\mathfrak{D}_{z}^{\mu}\{e^z;p,\alpha\}=\sum_{n=0}^\infty \frac{1}{n!}\mathfrak{D}_{z}^{\mu}\{z^{n};p,\alpha\}.
\end{align}
Now, applying Theorem \ref{th1}, we get the desired result.
\end{example}
\begin{theorem}\label{th3}
The following formula holds true:
\begin{eqnarray}
\mathfrak{D}_{z}^{\eta-\mu}\{z^{\eta-1}(1-z)^{-\beta};p,\alpha\}=
\frac{\Gamma(\eta)}{\Gamma(\mu)}z^{\mu-1}\,_2F_{1;p}^{\alpha}
\Big(\beta, \eta;\mu;z\Big),
\end{eqnarray}
where $\Re(\mu)> \Re(\eta)>0$ and $|z|<1$.
\end{theorem}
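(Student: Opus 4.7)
The plan is to expand the binomial factor, apply the fractional derivative term by term using Theorem \ref{th1}, and recognize the resulting series as the extended hypergeometric function $_2F_{1;p}^{\alpha}$.

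First I would write out the Maclaurin expansion
\[
z^{\eta-1}(1-z)^{-\beta}=\sum_{n=0}^{\infty}\frac{(\beta)_n}{n!}\,z^{\eta+n-1},
\]
which converges on $|z|<1$. Since the hypotheses match those of Theorem \ref{th2a} (analyticity at the origin, convergence inside $|z|<1$), I can pull $\mathfrak{D}_{z}^{\eta-\mu}\{\cdot;p,\alpha\}$ through the sum term by term.

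Next I would apply Theorem \ref{th1} with exponent $\eta+n-1$ and order $\eta-\mu$; note that $\Re(\eta-\mu)<0$ follows from the assumption $\Re(\mu)>\Re(\eta)$, so we are in the regime where the formula \eqref{fd1} is valid. This yields
\[
\mathfrak{D}_{z}^{\eta-\mu}\bigl\{z^{\eta+n-1};p,\alpha\bigr\}=\frac{B_{p}^{\alpha}(\eta+n,\mu-\eta)}{\Gamma(\mu-\eta)}\,z^{\mu+n-1}.
\]
Summing against the binomial coefficients gives
\[
\mathfrak{D}_{z}^{\eta-\mu}\bigl\{z^{\eta-1}(1-z)^{-\beta};p,\alpha\bigr\}=\frac{z^{\mu-1}}{\Gamma(\mu-\eta)}\sum_{n=0}^{\infty}(\beta)_n\,\frac{B_{p}^{\alpha}(\eta+n,\mu-\eta)}{n!}\,z^n.
\]

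Finally I would match the sum with the series definition \eqref{pqhyper} of $_2F_{1;p}^{\alpha}(\beta,\eta;\mu;z)$, whose coefficients contain the normalization $B(\eta,\mu-\eta)$. Factoring this out rewrites the sum as $B(\eta,\mu-\eta)\,_2F_{1;p}^{\alpha}(\beta,\eta;\mu;z)$, and using $B(\eta,\mu-\eta)/\Gamma(\mu-\eta)=\Gamma(\eta)/\Gamma(\mu)$ produces the claimed identity. The only non-trivial step is the interchange of fractional derivative and infinite sum, but this is exactly what Theorem \ref{th2a} was set up to justify on $|z|<1$, so the argument should go through cleanly.
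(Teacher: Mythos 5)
Your proof is correct, but it takes a different route from the paper's. The paper proves Theorem \ref{th3} directly from the integral definition \eqref{eq7}: it writes out $\mathfrak{D}_{z}^{\eta-\mu}\{z^{\eta-1}(1-z)^{-\beta};p,\alpha\}$ as an integral, substitutes $t=zu$, and recognizes the resulting integral as the integral representation \eqref{pqIhyper} of $F_{p}^{\alpha}(\beta,\eta;\mu;z)$ up to the factor $B(\eta,\mu-\eta)/\Gamma(\mu-\eta)=\Gamma(\eta)/\Gamma(\mu)$. You instead expand $(1-z)^{-\beta}$, differentiate term by term via Theorems \ref{th2a} and \ref{th1}, and match the result against the series definition \eqref{pqhyper}; this is essentially the strategy the paper reserves for Theorem \ref{th4} (the Appell case), so your argument is a single-variable instance of a method the authors themselves endorse. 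The integral route buys a one-line identification with no convergence issues to discuss; your series route makes the connection to the coefficients $B_{p}^{\alpha}(\eta+n,\mu-\eta)$ explicit and generalizes immediately to Theorem \ref{th4}. One small caveat: Theorem \ref{th2a} is stated for genuine Maclaurin expansions $\sum a_n z^n$, whereas your series $\sum \frac{(\beta)_n}{n!}z^{\eta+n-1}$ involves the non-integer powers $z^{\eta+n-1}$; the uniform-convergence justification for term-by-term integration carries over verbatim, and the paper itself invokes the same device in proving Theorem \ref{th4}, but strictly speaking you are using an extension of Theorem \ref{th2a} rather than the theorem as stated.
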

\begin{proof}
By direct calculation, we have
\begin{align*}
\mathfrak{D}_{z}^{\eta-\mu}\{z^{\eta-1}(1-z)^{-\beta};p,\alpha\}
&=\frac{1}{\Gamma(\mu-\eta)}\int_0^zt^{\eta-1}(1-t)^{-\beta}(z-t)^{\mu-\eta-1}E_\alpha\Big(-\frac{pz^2}{t(z-t)}\Big) dt\\
&=\frac{z^{\mu-\eta-1}}{\Gamma(\mu-\eta)}\int_0^zt^{\eta-1}(1-t)^{-\beta}(1-\frac{t}{z})^{\mu-\eta-1}
E_\alpha\Big(-\frac{pz^2}{t(z-t)}\Big) dt.
\end{align*}
Substituting $t=zu$ in the above equation, we get
 \begin{align*}
\mathfrak{D}_{z}^{\eta-\mu}\{z^{\eta-1}(1-z)^{-\beta};p,\alpha\}
&=\frac{z^{\mu-1}}{\Gamma(\mu-\eta)}\int_0^1u^{\eta-1}(1-uz)^{-\beta}(1-u)^{\mu-\eta-1}E_\alpha\Big(-\frac{pz^2}{uz(z-uz)}\Big) du\\
=&\frac{z^{\mu-1}}{\Gamma(\mu-\eta)}\int_0^1u^{\eta-1}(1-uz)^{-\beta}(1-u)^{\mu-\eta-1}E_\alpha\Big(-\frac{p}{u(1-u)}\Big) du
\end{align*}
Using (\ref{pqIhyper}) and after simplification we get the required proof.
\end{proof}
\begin{theorem}\label{th4}
The following formula holds true:
\begin{align}\label{fd3}
\mathfrak{D}_{z}^{\eta-\mu}\{z^{\eta-1}(1-az)^{-\alpha}(1-bz)^{-\beta};p,\alpha\}
=\frac{\Gamma(\eta)}{\Gamma(\mu)}z^{\mu-1}F_{1}\Big(\eta,\alpha,\beta;\mu;az,bz;p,\alpha\Big),
\end{align}
where $\Re(\mu)> \Re(\eta)>0$, $\Re(\alpha)>0$, $\Re(\beta)>0$, $|az|<1$ and $|bz|<1$.
\end{theorem}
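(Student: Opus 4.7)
The plan is to follow the same template used for Theorem \ref{th3}, which reduced a fractional-derivative computation to the integral representation of the extended hypergeometric function. Here we will instead reduce to the integral representation of the extended Appell function $F_1$ established in Theorem \ref{th2} (equation (\ref{pqIhyper2})).

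First I would unpack the definition (\ref{eq7}), applied with order $\eta-\mu$, to write
\[
\mathfrak{D}_{z}^{\eta-\mu}\{z^{\eta-1}(1-az)^{-\alpha}(1-bz)^{-\beta};p,\alpha\}
=\frac{1}{\Gamma(\mu-\eta)}\int_0^z t^{\eta-1}(1-at)^{-\alpha}(1-bt)^{-\beta}(z-t)^{\mu-\eta-1}E_\alpha\!\Big(-\tfrac{pz^2}{t(z-t)}\Big)\,dt.
\]
Next I would factor $z^{\mu-\eta-1}$ out of $(z-t)^{\mu-\eta-1}$ and perform the change of variable $t=zu$, $dt=z\,du$. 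The factor $-pz^2/(t(z-t))$ simplifies to $-p/(u(1-u))$, exactly as in the proof of Theorem \ref{th3}, and the prefactors combine to $z^{\mu-1}$, leaving
\[
\frac{z^{\mu-1}}{\Gamma(\mu-\eta)}\int_0^1 u^{\eta-1}(1-u)^{\mu-\eta-1}(1-azu)^{-\alpha}(1-bzu)^{-\beta}E_\alpha\!\Big(-\tfrac{p}{u(1-u)}\Big)\,du.
\]

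Now I would invoke Theorem \ref{th2} with parameters $\sigma_1=\eta$, $\sigma_2=\alpha$, $\sigma_3=\beta$, $\sigma_4=\mu$, $x=az$, $y=bz$, which expresses precisely this integral as $B(\eta,\mu-\eta)\,F_1(\eta,\alpha,\beta;\mu;az,bz;p,\alpha)$. Finally, using $B(\eta,\mu-\eta)=\Gamma(\eta)\Gamma(\mu-\eta)/\Gamma(\mu)$, the factor $\Gamma(\mu-\eta)$ cancels against $1/\Gamma(\mu-\eta)$ and one recovers the claimed identity with prefactor $\Gamma(\eta)z^{\mu-1}/\Gamma(\mu)$.

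There is essentially no hard step; the only things to check are convergence and the legitimacy of the pointwise identifications. The conditions $\Re(\mu)>\Re(\eta)>0$ guarantee integrability at both endpoints (the Mittag--Leffler factor $E_\alpha(-p/(u(1-u)))$ decays rapidly enough near $0$ and $1$ to overcome any mild singularity, as already used in Theorem \ref{th2}), and the hypotheses $|az|,|bz|<1$ keep the binomial factors analytic on $[0,1]$, so applying Theorem \ref{th2} is justified. The only potential nuisance is the clash of notation: the symbol $\alpha$ plays two roles (Mittag--Leffler parameter and Appell exponent), so one must be careful in the statement but this does not affect the computation.
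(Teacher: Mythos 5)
Your proof is correct, but it takes a different route from the paper's. You reduce the claim to the integral representation of the extended Appell function (Theorem \ref{th2}): after the substitution $t=zu$ the fractional-derivative integral becomes exactly the integral in (\ref{pqIhyper2}) with $\sigma_1=\eta$, $\sigma_2=\alpha$, $\sigma_3=\beta$, $\sigma_4=\mu$, $x=az$, $y=bz$, and the Beta-function identity $B(\eta,\mu-\eta)=\Gamma(\eta)\Gamma(\mu-\eta)/\Gamma(\mu)$ finishes the computation; this is the natural continuation of the argument used for Theorem \ref{th3}. The paper instead expands $(1-az)^{-\alpha}(1-bz)^{-\beta}$ as a double binomial series, applies the operator term by term to the monomials $z^{\eta+m+n-1}$ via Theorem \ref{th1}, and then recognizes the resulting double series as the series definition (\ref{Appell}) of $F_1(\eta,\alpha,\beta;\mu;az,bz;p,\alpha)$. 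Your approach buys a shorter, single-integral argument that leans on the already-proved integral representation and avoids justifying termwise application of the operator to a double series; the paper's approach stays entirely at the level of series manipulations and needs only the monomial formula, at the cost of an interchange of summation and integration that it does not spell out. Your remarks on convergence (the hypotheses $\Re(\mu)>\Re(\eta)>0$ and $|az|,|bz|<1$) and on the double use of the symbol $\alpha$ are both apt.
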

\begin{proof}
Consider the following power series expansion
\begin{eqnarray*}
(1-az)^{-\alpha}(1-bz)^{-\beta}=\sum_{m=0}^\infty\sum_{n=0}^\infty(\alpha)_m(\beta)_n\frac{(az)^m}{m!}\frac{(bz)^n}{n!}.
\end{eqnarray*}
Now, applying Theorem \ref{th3}, we obtain
\begin{align*}
&\mathfrak{D}_{z}^{\eta-\mu}\{z^{\eta-1}(1-az)^{-\alpha}(1-bz)^{-\beta};p,\alpha\}\\
&=\sum_{m=0}^\infty\sum_{n=0}^\infty(\alpha)_m(\beta)_n\frac{(a)^m}{m!}\frac{(b)^n}{n!}
\mathfrak{D}_{z}^{\eta-\mu}\{z^{\eta+m+n-1};p,\alpha\}.
\end{align*}
Using Theorem \ref{th1}, we have
\begin{align*}
&\mathfrak{D}_{z}^{\eta-\mu}\{z^{\eta-1}(1-az)^{-\alpha}(1-bz)^{-\beta};p,\alpha\}\\
&=\sum_{m=0}^\infty\sum_{n=0}^\infty(\alpha)_m(\beta)_n\frac{(a)^m}{m!}\frac{(b)^n}{n!}
\frac{B_{p}^{\alpha}(\eta+m+n,\mu-\eta)}
{\Gamma(\mu-\eta)}z^{\mu+m+n-1}.
\end{align*}
Now, applying (\ref{Appell}), we get
\begin{align*}
\mathfrak{D}_{z}^{\eta-\mu}\{z^{\eta-1}(1-az)^{-\alpha}(1-bz)^{-\beta};p,\alpha\}
=\frac{\Gamma(\eta)}{\Gamma(\mu)}z^{\mu-1}F_{1}\Big(\eta,\alpha,\beta;\mu;az,bz;p,\alpha\Big).
\end{align*}
\end{proof}
\begin{theorem}\label{th5}
The following Mellin transform formula holds true:
\begin{eqnarray}\label{Mellin}
M\Big\{\mathfrak{D}_{z;p}^{\mu;\alpha}(z^{\eta});p\rightarrow r,\Big\}=\frac{\pi}{\sin(\pi r)}\frac{z^{\eta-\mu}}{\Gamma(-\mu)\Gamma(1-r\alpha)}B(\eta+r+1,-\mu+r),
\end{eqnarray}
where $\Re(\eta)>-1$, $\Re(\mu)<0$,  $\Re(r)>0$.
\end{theorem}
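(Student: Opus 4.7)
The plan is to start from Theorem \ref{th1}, which already evaluates $\mathfrak{D}_{z;p}^{\mu;\alpha}\{z^{\eta}\}$ as $z^{\eta-\mu} B_{p}^{\alpha}(\eta+1,-\mu)/\Gamma(-\mu)$. Since $z^{\eta-\mu}/\Gamma(-\mu)$ is independent of $p$, the whole problem reduces to computing the Mellin transform (in $p$) of $B_{p}^{\alpha}(\eta+1,-\mu)$. I would first insert the integral definition \eqref{Cbeta} of $B_{p}^{\alpha}$ and write
\begin{align*}
M\bigl\{B_{p}^{\alpha}(\eta+1,-\mu);\,p\to r\bigr\}
=\int_{0}^{\infty}p^{r-1}\int_{0}^{1}t^{\eta}(1-t)^{-\mu-1}E_{\alpha}\!\Bigl(-\tfrac{p}{t(1-t)}\Bigr)\,dt\,dp.
\end{align*}

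Next I would justify (by absolute convergence, using the rapid decay of $E_{\alpha}(-\cdot)$ and the stated ranges $\Re(\eta)>-1$, $\Re(\mu)<0$, $\Re(r)>0$) that Fubini applies, so that the $p$-integral can be done first. A substitution $p=u\,t(1-t)$ in the inner integral converts it into $(t(1-t))^{r}\int_{0}^{\infty}u^{r-1}E_{\alpha}(-u)\,du$. The remaining $u$-integral is the classical Mellin transform of the Mittag-Leffler function,
\begin{align*}
\int_{0}^{\infty}u^{r-1}E_{\alpha}(-u)\,du=\frac{\Gamma(r)\Gamma(1-r)}{\Gamma(1-\alpha r)}=\frac{\pi}{\sin(\pi r)\,\Gamma(1-\alpha r)},
\end{align*}
which follows term-by-term from $E_{\alpha}(-u)=\sum_{n\ge 0}(-u)^{n}/\Gamma(\alpha n+1)$ via Ramanujan's master theorem, and then Euler's reflection formula \eqref{rel1}. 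I would cite this as known (it is in the same spirit as the reflection argument used in the proof of Proposition~2.1).

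Putting the pieces together, the outer integral in $t$ collapses to
\begin{align*}
\int_{0}^{1}t^{\eta+r}(1-t)^{-\mu+r-1}\,dt=B(\eta+r+1,-\mu+r),
\end{align*}
valid under the stated parameter conditions. Multiplying by the prefactor $z^{\eta-\mu}/\Gamma(-\mu)$ from Theorem \ref{th1} yields exactly \eqref{Mellin}.

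The main obstacle is a minor one: justifying the interchange of the $p$- and $t$-integrations. Since $E_{\alpha}(-x)$ is not monotone for $\alpha\in(1,2]$ and can take negative values, one cannot simply invoke Tonelli; instead, I would bound $|E_{\alpha}(-p/(t(1-t)))|$ uniformly (using the standard asymptotic $E_{\alpha}(-x)=O(x^{-1})$ as $x\to\infty$ and $E_{\alpha}(0)=1$) to confirm that the double integral is absolutely convergent for $0<\Re(r)<1/\alpha$, and then extend by analytic continuation in $r$. Beyond this technicality, every other step is a direct substitution or an appeal to a result already in the paper.
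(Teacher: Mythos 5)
Your proposal is correct and follows essentially the same route as the paper: the paper substitutes $t=uz$ in the defining integral (which produces exactly the integral for $B_{p}^{\alpha}(\eta+1,-\mu)$ that you obtain by invoking Theorem \ref{th1}), interchanges the order of integration, substitutes $v=p/(u(1-u))$, evaluates $\int_{0}^{\infty}v^{r-1}E_{\alpha}(-v)\,dv=\Gamma(r)\Gamma(1-r)/\Gamma(1-r\alpha)$, and finishes with the Beta integral and Euler's reflection formula. Your added care about justifying the interchange of integrals (the paper only asserts uniform convergence) is a welcome refinement but not a different method.
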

\begin{proof}
Applying the Mellin transform on definition (\ref{eq7}), we have
\begin{align*}
&M\Big\{\mathfrak{D}_{z;p}^{\mu;\alpha}(z^{\eta});p\rightarrow r\Big\}=\int_0^\infty p^{r-1}\mathfrak{D}_{z;p}^{\mu;\alpha}(z^{\eta})dp\\
&=\frac{1}{\Gamma(-\mu)}\int_0^\infty p^{r-1}\Big\{\int_0^z t^\eta(z-t)^{-\mu-1}\,
E_\alpha\Big(-\frac{pz^2}{t(z-t)}\Big)dt\Big\}dp \\
&=\frac{z^{-\mu-1}}{\Gamma(-\mu)}\int_0^\infty p^{r-1}\Big\{\int_0^z t^\eta(1-\frac{t}{z})^{-\mu-1}\,
E_\alpha\Big(-\frac{pz^2}{t(z-t)}\Big)dt\Big\}dp \\
&=\frac{z^{\eta-\mu}}{\Gamma(-\mu)}\int_0^\infty p^{r-1}\Big\{\int_0^1 u^\eta(1-u)^{-\mu-1}\,
E_\alpha\Big(-\frac{p}{u(1-u)}\Big)du\Big\}dp
\end{align*}
From the uniform convergence of the integral, the order of integration can be interchanged. Thus, we have
\begin{align}\label{Th-eqn-Mellin}
M\Big\{\mathfrak{D}_{z;p}^{\mu;\alpha}(z^{\eta});p\rightarrow r\Big\}&=\frac{z^{\eta-\mu}}{\Gamma(-\mu)}\notag\\
&\times\int_0^1 u^\eta(1-u)^{-\mu-1}\Big(\int_0^\infty p^{r-1}\,E_\alpha\Big(-\frac{p}{u(1-u)}\Big)dp\Big)du.
\end{align}
Letting $v=\frac{p}{u(1-u)}$, \eqref{Th-eqn-Mellin} reduces to
\begin{align*}
&M\Big\{\mathfrak{D}_{z;p}^{\mu;\alpha}(z^{\eta});p\rightarrow r\Big\}=\frac{z^{\eta-\mu}}{\Gamma(-\mu)}\int_0^1 u^{\eta+r}(1-u)^{-\mu+r-1}\Big(\int_0^\infty v^{r-1}\,E_\alpha\Big(-v\Big)dv\Big)du.
\end{align*}
By using the following formula,
\begin{align}\label{gam}
\int_{0}^{\infty}v^{r-1}E_{\alpha,\gamma}^{\delta}(-wv)dv=\frac{\Gamma(r)\Gamma(\delta-r)}{\Gamma(\delta)w^{r}\Gamma(\gamma-r\alpha)},
\end{align}
 for $\gamma=\delta=1$ and $w=1$, we have
\begin{align*}
M\Big\{\mathfrak{D}_{z;p}^{\mu;\alpha}(z^{\eta});p\rightarrow r\Big\}&=\frac{z^{\eta-\mu}\Gamma(r)\Gamma(1-r)}{\Gamma(-\mu)\Gamma(1-r\alpha)}\int_0^1 u^{\eta+r}(1-u)^{-\mu+r-1}du\\
&=\frac{z^{\eta-\mu}\Gamma(r)\Gamma(1-r)}{\Gamma(-\mu)\Gamma(1-r\alpha)}B(\eta+r+1,-\mu+r),
\end{align*}
Now, using \eqref{rel1} we get the desired result.
\end{proof}
\begin{theorem}\label{th6}
The following Mellin transform formula holds true:
\begin{align}\label{Mellin1}
M\Big\{\mathfrak{D}_{z;p}^{\mu;\alpha}((1-z)^{\alpha});p\rightarrow r\Big\}&=z^{-\mu}\frac{\pi}{\sin(\pi r)}\frac{B(1+r,-\mu+r)}{\Gamma(-\mu)\Gamma(1-r\alpha)}\notag\\
&\times_2F_1\Big(\lambda,r+1;1-\mu+2r;z\Big),
\end{align}
where $\Re(p)>0$, $\Re(\mu)<0$, $\Re(r)>0$.
\end{theorem}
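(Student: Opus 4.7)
The plan is to reduce the statement to Theorem~\ref{th5} by expanding the binomial factor into its Maclaurin series and then recognizing the resulting power series in $z$ as a Gauss hypergeometric function. Throughout I interpret the exponent on the left as $-\lambda$ (so that the $\lambda$ appearing on the right matches), since the series expansion is what pins down the parameter of the hypergeometric.

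First, I would write
\begin{equation*}
(1-z)^{-\lambda}=\sum_{n=0}^{\infty}\frac{(\lambda)_{n}}{n!}\,z^{n},\qquad |z|<1,
\end{equation*}
and apply Theorem~\ref{th2a} to push the operator $\mathfrak{D}_{z;p}^{\mu;\alpha}$ through the sum termwise; this is legitimate because the series converges uniformly on compact subsets of $|z|<1$, which was the hypothesis used in Theorem~\ref{th2a}. Taking the Mellin transform in $p$ and invoking Theorem~\ref{th5} on each monomial $z^{n}$ yields
\begin{equation*}
M\bigl\{\mathfrak{D}_{z;p}^{\mu;\alpha}((1-z)^{-\lambda});p\to r\bigr\}
=\frac{\pi}{\sin(\pi r)}\,\frac{z^{-\mu}}{\Gamma(-\mu)\Gamma(1-r\alpha)}\sum_{n=0}^{\infty}\frac{(\lambda)_{n}}{n!}\,B(n+r+1,-\mu+r)\,z^{n}.
\end{equation*}

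Next I would rewrite the Beta factor using $\Gamma(n+r+1)=(r+1)_{n}\Gamma(r+1)$ and $\Gamma(n+1-\mu+2r)=(1-\mu+2r)_{n}\Gamma(1-\mu+2r)$, so that
\begin{equation*}
B(n+r+1,-\mu+r)=B(r+1,-\mu+r)\,\frac{(r+1)_{n}}{(1-\mu+2r)_{n}}.
\end{equation*}
Substituting this into the previous display collapses the remaining $z$-series to
\begin{equation*}
\sum_{n=0}^{\infty}\frac{(\lambda)_{n}(r+1)_{n}}{(1-\mu+2r)_{n}}\frac{z^{n}}{n!}={}_{2}F_{1}\bigl(\lambda,r+1;1-\mu+2r;z\bigr),
\end{equation*}
which when combined with the constant factor $B(r+1,-\mu+r)$ produces exactly the right-hand side of \eqref{Mellin1}.

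The main technical point is justifying the two interchanges of summation and integration: inside the fractional derivative (already handled by Theorem~\ref{th2a}) and between the Mellin integral in $p$ and the series in $n$. For the latter I would argue as in the proof of Theorem~\ref{th5}, using the positivity and monotone decay of $E_{\alpha}\!\bigl(-p/[u(1-u)]\bigr)$ in $p$ together with $|z|<1$ and the bound on $(\lambda)_{n}/n!$ to apply Fubini/Tonelli; everything else is an algebraic manipulation of Pochhammer symbols and the Euler reflection identity \eqref{rel1} already invoked in Theorem~\ref{th5}.
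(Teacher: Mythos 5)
Your proof follows essentially the same route as the paper's: expand the binomial factor as $\sum_{n\ge 0}\frac{(\lambda)_n}{n!}z^n$, apply the monomial Mellin formula of Theorem~\ref{th5} termwise, and finish with the reflection identity \eqref{rel1}. In fact your write-up is more complete than the paper's, since you correctly read the misprinted $(1-z)^{\alpha}$ as $(1-z)^{-\lambda}$ and you supply the Pochhammer step $B(n+r+1,-\mu+r)=B(r+1,-\mu+r)\,\frac{(r+1)_n}{(1-\mu+2r)_n}$ that identifies the series with ${}_2F_1(\lambda,r+1;1-\mu+2r;z)$, a step the paper leaves implicit.
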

\begin{proof}
Applying Theorem \ref{th5} with $\eta=n$, we can write
\begin{align*}
M\Big\{\mathfrak{D}_{z;p}^{\mu;\alpha}((1-z)^{\lambda});p\rightarrow r\Big\}&=
\sum_{n=0}^\infty\frac{(\lambda)_n}{n!}
M\Big\{\mathfrak{D}_{z;p}^{\mu;\alpha}(z^{n});p\rightarrow r\Big\}\\
&=\frac{\Gamma(r)\Gamma(1-r)}{\Gamma(-\mu)\Gamma(1-r\alpha)}\sum_{n=0}^\infty\frac{(\lambda)_n}{n!}
B(n+r+1,-\mu+r)z^{n-\mu}\\
&=z^{-\mu}\frac{\Gamma(r)\Gamma(1-r)}{\Gamma(-\mu)\Gamma(1-r\alpha)}\sum_{n=0}^\infty
B(n+r+1,-\mu+r)\frac{(\lambda)_nz^{n}}{n!}.
\end{align*}
In view of \eqref{rel1}, we obtain the required result.
\end{proof}
\section{Generating relations}
In this section, we derive generating relations of linear and bilinear type for the extended hypergeometric functions.
\begin{theorem}
The following generating relation holds true:
\begin{align}\label{fd4}
\sum_{n=0}^\infty\frac{(\lambda)_n}{n!}\,_2F_{1;p}^{\alpha}\Big(\lambda+n,\beta;\gamma;z\Big)t^n
=(1-t)^{-\lambda}\,_2F_{1;p}^{\alpha}\Big(\lambda,\beta;\gamma;\frac{z}{1-t}\Big),
\end{align}
where $|z|<\min(1, |1-t|)$, $\Re(\lambda)>0$, $\Re(\alpha)>0$, $\Re(\gamma)>\Re(\beta)>0$.

\end{theorem}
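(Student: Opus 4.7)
The plan is to prove the identity by series manipulation, using the definition \eqref{pqhyper} of the modified extended hypergeometric function and a standard Pochhammer shift.

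First I would insert the series representation of $_2F_{1;p}^{\alpha}(\lambda+n,\beta;\gamma;z)$ from \eqref{pqhyper} into the left-hand side of \eqref{fd4}, obtaining a double series
\begin{equation*}
\sum_{n=0}^{\infty}\sum_{k=0}^{\infty}\frac{(\lambda)_n (\lambda+n)_k}{n!\,k!}\,\frac{B_{p}^{\alpha}(\beta+k,\gamma-\beta)}{B(\beta,\gamma-\beta)}\,z^{k} t^{n}.
\end{equation*}
The plan is then to interchange the two summations. Under the stated hypothesis $|z|<\min(1,|1-t|)$ together with $\Re(\gamma)>\Re(\beta)>0$ and $\Re(\lambda)>0$, the double series converges absolutely (the extended Beta factors satisfy $0\le B_{p}^{\alpha}(\beta+k,\gamma-\beta)/B(\beta,\gamma-\beta) \le$ a constant times a ratio of Pochhammers by \eqref{Cbeta} and the positivity of $E_\alpha(-p/(t(1-t)))$ for $p>0$), so Fubini for series applies.

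Next I would use the elementary Pochhammer identity $(\lambda)_n(\lambda+n)_k = (\lambda)_k(\lambda+k)_n$ to isolate the $n$-sum, and then evaluate it by the binomial series
\begin{equation*}
\sum_{n=0}^{\infty}\frac{(\lambda+k)_n}{n!}\,t^{n}=(1-t)^{-\lambda-k},\qquad |t|<1.
\end{equation*}
This collapses the left-hand side to
\begin{equation*}
(1-t)^{-\lambda}\sum_{k=0}^{\infty}(\lambda)_k\,\frac{B_{p}^{\alpha}(\beta+k,\gamma-\beta)}{B(\beta,\gamma-\beta)}\,\frac{1}{k!}\left(\frac{z}{1-t}\right)^{k},
\end{equation*}
and the inner series is exactly $_2F_{1;p}^{\alpha}\!\left(\lambda,\beta;\gamma;z/(1-t)\right)$ by \eqref{pqhyper}, which yields \eqref{fd4}.

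The only delicate point — and hence the main obstacle — is justifying the interchange of summation and ensuring the region $|z|<\min(1,|1-t|)$ is the correct one: the factor $(1-t)^{-\lambda-k}$ requires $|t|<1$ for the $n$-series to converge, and $|z/(1-t)|<1$ is required so that the transformed hypergeometric series on the right converges, giving exactly the condition stated in the theorem. Everything else is routine algebra. (Alternatively, one could start from the integral representation \eqref{pqIhyper} of the left-hand side, pull the $n$-sum inside the integral, and apply $\sum_n (\lambda)_n (1-zt)^{-n} t^n / n! \cdot (1-zt)^{-\lambda} $-style manipulations; but the series-level proof above is cleaner and avoids an additional interchange of sum and integral.)
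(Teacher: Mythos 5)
Your argument is correct, but it takes a genuinely different route from the paper. The paper's proof starts from the binomial identity $[(1-z)-t]^{-\lambda}=(1-t)^{-\lambda}[1-z/(1-t)]^{-\lambda}$, expands the left side as in \eqref{fd5}, multiplies by $z^{\beta-1}$, applies the operator $\mathfrak{D}_{z;p}^{\beta-\gamma;\alpha}$ to both sides, interchanges the operator with the sum, and then invokes Theorem \ref{th3} to turn each term $\mathfrak{D}_{z;p}^{\beta-\gamma;\alpha}\bigl[z^{\beta-1}(1-z)^{-\lambda-n}\bigr]$ into $\frac{\Gamma(\beta)}{\Gamma(\gamma)}z^{\gamma-1}\,{}_2F_{1;p}^{\alpha}(\lambda+n,\beta;\gamma;z)$, with the analogous evaluation on the right-hand side, after which the common factor $\frac{\Gamma(\beta)}{\Gamma(\gamma)}z^{\gamma-1}$ cancels. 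You instead work entirely at the level of the defining series \eqref{pqhyper}, using $(\lambda)_n(\lambda+n)_k=(\lambda)_{n+k}=(\lambda)_k(\lambda+k)_n$ and the binomial series to resum in $n$. Your version is more elementary and self-contained: it bypasses the fractional-derivative machinery and Theorem \ref{th3} entirely, and it makes transparent that the extended Beta coefficients $B_p^{\alpha}(\beta+k,\gamma-\beta)$ are pure spectators in the identity. What the paper's route buys is mainly thematic (the section is about generating relations obtained \emph{via} the new operator), and under the hood the two arguments coincide, since term-by-term application of the operator to $z^{\beta-1}(1-z)^{-\lambda-n}$ regenerates exactly your double series. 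One small caution: absolute convergence of your double series actually needs roughly $|t|+|z|<1$ (the sum of absolute values behaves like $(1-|t|-|z|)^{-\Re(\lambda)}$ up to a constant), which is not implied by $|z|<\min(1,|1-t|)$ for all complex $t$; the identity then extends to the stated region by analytic continuation, and your bound on $B_p^{\alpha}$ via the boundedness of $E_{\alpha}$ on the negative real axis should be restricted to, say, $0<\alpha\le 1$ to be safe. The paper is no more careful on these points.
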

\begin{proof}
Consider the following series identity
\begin{eqnarray*}
[(1-z)-t]^{-\lambda}=(1-t)^{-\lambda}[1-\frac{z}{1-t}]^{-\lambda}.
\end{eqnarray*}
Thus, the power series expansion yields
\begin{eqnarray}\label{fd5}
\sum_{n=0}^\infty\frac{(\lambda)_n}{n!}(1-z)^{-\lambda}\Big(\frac{t}{1-z}\Big)^n=(1-t)^{-\lambda}[1-\frac{z}{1-t}]^{-\lambda}.
\end{eqnarray}
Multiplying both sides of (\ref{fd5}) by $z^{\beta-1}$ and then applying the operator $\mathfrak{D}_{z;p}^{\beta-\gamma;\alpha}$ on both sides, we have
\begin{align*}
\mathfrak{D}_{z;p}^{\beta-\gamma;\alpha}\Big[\sum_{n=0}^\infty\frac{(\lambda)_n}{n!}
(1-z)^{-\lambda}\Big(\frac{t}{1-z}\Big)^nz^{\beta-1}\Big]=(1-t)^{-\lambda}\mathfrak{D}_{z;p}^{\beta-\gamma;\alpha}
\Big[z^{\beta-1}\Big(1-\frac{z}{1-t}\Big)^{-\lambda}\Big].
\end{align*}
Interchanging the order of summation and the operator $\mathfrak{D}_{z;p}^{\beta-\gamma;\alpha}$, we have
\begin{align*}
\sum_{n=0}^\infty\frac{(\lambda)_n}{n!}\mathfrak{D}_{z;p}^{\beta-\gamma;\alpha}\Big[
z^{\beta-1}(1-z)^{-\lambda-n}\Big]t^n=(1-t)^{-\lambda}\mathfrak{D}_{z;p}^{\beta-\gamma;\alpha}
\Big[z^{\beta-1}\Big(1-\frac{z}{1-t}\Big)^{-\lambda}\Big].
\end{align*}
Thus by applying Theorem \ref{th3}, we obtain the required result.
\end{proof}
\begin{theorem}
The following generating relation holds true:
\begin{align}\label{fd6}
\sum_{n=0}^\infty\frac{(\beta)_n}{n!}\,_2F_{1;p}^{\alpha}\Big(\delta-n,\beta;\gamma;z\Big)t^n
=(1-t)^{-\beta}F_{1}\Big(\lambda,\delta,\beta;\gamma;-\frac{zt}{1-t};p,\alpha\Big),
\end{align}
where $|z|<\frac{1}{1+|t|}$, $\Re(\delta)>0$, $\Re(\beta)>0$, $\Re(\gamma)>\Re(\lambda)>0$.

\end{theorem}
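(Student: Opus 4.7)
The plan is to parallel the proof of the preceding generating relation, but to start from a two-factor identity that exposes the bivariate structure of Appell's $F_1$. I would begin with the algebraic identity
\[
(1-t)^{-\beta}\Bigl(1-\tfrac{-t}{1-t}\,z\Bigr)^{-\beta} \;=\; \bigl[1-t(1-z)\bigr]^{-\beta},
\]
together with the binomial expansion in $t$ of the right-hand side,
\[
\bigl[1-t(1-z)\bigr]^{-\beta} \;=\; \sum_{n=0}^{\infty}\frac{(\beta)_n}{n!}\,t^{n}(1-z)^{n}.
\]

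Next, I multiply both sides by $z^{\lambda-1}(1-z)^{-\delta}$ and apply the operator $\mathfrak{D}_{z;p}^{\lambda-\gamma;\alpha}$. On the left, the integrand has exactly the form $z^{\lambda-1}(1-az)^{-\delta}(1-bz)^{-\beta}$ demanded by Theorem \ref{th4}, with $a=1$ and $b=-t/(1-t)$, so that theorem produces
\[
(1-t)^{-\beta}\,\frac{\Gamma(\lambda)}{\Gamma(\gamma)}\,z^{\gamma-1}\,F_{1}\bigl(\lambda,\delta,\beta;\gamma;z,-\tfrac{zt}{1-t};p,\alpha\bigr).
\]
On the right, applying $\mathfrak{D}_{z;p}^{\lambda-\gamma;\alpha}$ term-by-term to $z^{\lambda-1}(1-z)^{-(\delta-n)}$ and invoking Theorem \ref{th3} (with $\eta=\lambda$ and $\mu=\gamma$) yields $\frac{\Gamma(\lambda)}{\Gamma(\gamma)}z^{\gamma-1}\,{}_2F_{1;p}^{\alpha}(\delta-n,\lambda;\gamma;z)$ in each summand. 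Cancelling the common prefactor $\Gamma(\lambda)z^{\gamma-1}/\Gamma(\gamma)$ on both sides matches the two expressions and delivers the claimed identity.

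The one genuinely nontrivial step is justifying the interchange of the summation with the fractional derivative operator. Since $\mathfrak{D}_{z;p}^{\lambda-\gamma;\alpha}$ acts as an integral against the kernel $(z-\tau)^{\gamma-\lambda-1}E_{\alpha}\bigl(-pz^{2}/(\tau(z-\tau))\bigr)$ on $\tau\in[0,z]$, term-wise integration will be legitimate as soon as $\sum_{n}\frac{(\beta)_n}{n!}t^{n}(1-\tau)^{n}$ converges uniformly in $\tau$ on this segment. The hypothesis $|z|<1/(1+|t|)$ is precisely what keeps $|t(1-\tau)|$ strictly below $1$ uniformly there, so dominated convergence (aided by the Mittag-Leffler damping factor) validates the swap, exactly as in the proof of Theorem \ref{th2a}. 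I expect this uniform-convergence/Fubini check to be the only real obstacle; the rest of the argument is the symbolic bookkeeping described above.
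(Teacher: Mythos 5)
Your argument is the paper's own proof in all essentials: the same series identity $[1-t(1-z)]^{-\beta}=(1-t)^{-\beta}[1-\tfrac{-zt}{1-t}]^{-\beta}$, the same multiplication by $z^{\lambda-1}(1-z)^{-\delta}$ followed by $\mathfrak{D}_{z;p}^{\lambda-\gamma;\alpha}$, and the same invocation of Theorems \ref{th3} and \ref{th4} on the two sides, with your justification of the term-by-term interchange being somewhat more careful than the paper's. Note that what you actually derive --- ${}_2F_{1;p}^{\alpha}(\delta-n,\lambda;\gamma;z)$ in each summand and the two-variable $F_{1}\bigl(\lambda,\delta,\beta;\gamma;z,-\tfrac{zt}{1-t};p,\alpha\bigr)$ on the right --- is the corrected form of the identity: the printed statement has typographical slips ($\beta$ where $\lambda$ should appear as the second parameter of the ${}_2F_1$, and a missing first argument $z$ in the Appell function), so your version is the one that is actually true.
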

\begin{proof}
Consider the series identity
\begin{eqnarray*}
[1-(1-z)t]^{-\beta}=(1-t)^{-\beta}\Big[1+\frac{zt}{1-t}\Big]^{-\beta}.
\end{eqnarray*}
Using the power series expansion to the left sides, we have
\begin{eqnarray}\label{fd7}
\sum_{n=0}^\infty\frac{(\beta)_n}{n!}(1-z)^nt^n=(1-t)^{-\beta}\Big[1-\frac{-zt}{1-t}\Big]^{-\beta}.
\end{eqnarray}
Multiplying both sides of (\ref{fd7}) by $z^{\alpha-1}(1-z)^{-\delta}$ and applying the operator $\mathfrak{D}_{z;p}^{\lambda-\gamma;\alpha}$ on both sides, we have
\begin{eqnarray*}
\mathfrak{D}_{z;p}^{\lambda-\gamma;\alpha}\Big[\sum_{n=0}^\infty\frac{(\beta)_n}{n!}z^{\alpha-1}(1-z)^{-\delta+n}t^n\Big]
=(1-t)^{-\beta}\mathfrak{D}_{z;p}^{\lambda-\gamma;\alpha}\Big[z^{\lambda-1}(1-z)^{-\delta}\Big(1-\frac{-zt}{1-t}\Big)^{-\beta}
\Big],
\end{eqnarray*}
where $\Re(\lambda)>0$ and $|zt|<|1-t|$,
thus by Theorem \ref{th2}, we have
\begin{eqnarray*}
\sum_{n=0}^\infty\frac{(\beta)_n}{n!}\mathfrak{D}_{z;p}^{\lambda-\gamma;\alpha}\Big[z^{\lambda-1}(1-z)^{-\delta+n}\Big]t^n
=(1-t)^{-\beta}\mathfrak{D}_{z;p}^{\lambda-\gamma;\alpha}\Big[z^{\lambda-1}(1-z)^{-\delta}\Big(1-\frac{-zt}{1-t}\Big)^{-\beta}
\Big].
\end{eqnarray*}
Applying Theorem \ref{th4} on both sides, we get the required result.
\end{proof}

\begin{theorem}\label{th7}
The following result holds true:
\begin{eqnarray}\label{fd8}
\mathfrak{D}_{z;p}^{\eta-\mu;\alpha}\Big[z^{\eta-1}E_{\gamma,\delta}^{\mu}(z)\Big]=\frac{z^{\mu-1}}{\Gamma(\mu-\eta)}
\sum_{n=0}^\infty\frac{(\mu)_n}{\Gamma(\gamma n+\delta)}B_{p}^{\alpha}(\eta+n,\mu-\eta)\frac{z^n}{n!},
\end{eqnarray}
where $\gamma,\delta,\mu, \alpha\in\mathbb{C}$, $\Re(p)>0$,  $\Re(\mu)>\Re(\eta)>0$ and $E_{\gamma,\delta}^{\mu}(z)$ is Mittag-Leffler function (see \cite{Prabhakar}) defined as:
\begin{eqnarray}\label{fd9}
E_{\gamma,\delta}^{\mu}(z)=\sum_{n=0}^\infty\frac{(\mu)_n}{\Gamma(\gamma n+\delta)}\frac{z^n}{n!}, \Re(\gamma)>0.
\end{eqnarray}
\begin{proof}
Using (\ref{fd9}) in (\ref{fd8}), we have
\begin{eqnarray*}
\mathfrak{D}_{z;p}^{\eta-\mu;\alpha}\Big[z^{\eta-1}E_{\gamma,\delta}^{\mu}(z)\Big]=
\mathfrak{D}_{z;p}^{\eta-\mu;\alpha}\Big[z^{\eta-1}\Big\{\sum_{n=0}^\infty\frac{(\mu)_n}{\Gamma(\gamma n+\delta)}\frac{z^n}{n!}\Big\}\Big].
\end{eqnarray*}
By Theorem \ref{th2}, we have
\begin{eqnarray*}
\mathfrak{D}_{z;p}^{\eta-\mu;\alpha}\Big[z^{\eta-1}E_{\gamma,\delta}^{\mu}(z)\Big]=
\sum_{n=0}^\infty\frac{(\mu)_n}{\Gamma(\gamma n+\delta)}\Big\{\mathfrak{D}_{z;p}^{\eta-\mu;\alpha}\Big[z^{\eta+n-1}\Big]\Big\}.
\end{eqnarray*}
Applying Theorem \ref{th1}, we get the required proof.
\end{proof}
\end{theorem}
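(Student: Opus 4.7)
The plan is to read the statement as a direct application of the series-differentiation machinery already established in Theorems \ref{th1} and \ref{th2a}, since the function being differentiated is a power $z^{\eta-1}$ times an analytic function (the generalized Mittag-Leffler function of Prabhakar). First I would substitute the defining series \eqref{fd9} for $E_{\gamma,\delta}^{\mu}(z)$ so that the argument of $\mathfrak{D}_{z;p}^{\eta-\mu;\alpha}$ becomes
\begin{equation*}
z^{\eta-1} E_{\gamma,\delta}^{\mu}(z) = \sum_{n=0}^{\infty}\frac{(\mu)_n}{\Gamma(\gamma n+\delta)\,n!}\, z^{\eta+n-1}.
\end{equation*}

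Next I would invoke Theorem \ref{th2a} to justify pulling the modified fractional derivative operator through the infinite sum. The hypotheses there require analyticity of the input at the origin together with a Maclaurin expansion on some disk $|z|<\delta$; here $E_{\gamma,\delta}^{\mu}(z)$ is entire whenever $\Re(\gamma)>0$ (which is part of the standing assumption), and multiplication by $z^{\eta-1}$ only shifts the exponents, so the term-by-term differentiation is legitimate on the prescribed domain. This yields
\begin{equation*}
\mathfrak{D}_{z;p}^{\eta-\mu;\alpha}\bigl[z^{\eta-1} E_{\gamma,\delta}^{\mu}(z)\bigr] = \sum_{n=0}^{\infty}\frac{(\mu)_n}{\Gamma(\gamma n+\delta)\,n!}\, \mathfrak{D}_{z;p}^{\eta-\mu;\alpha}\bigl\{z^{\eta+n-1}\bigr\}.
\end{equation*}

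Then I would apply Theorem \ref{th1} to each monomial inside the sum. With order $\eta-\mu$ (in place of $\mu$) and exponent $\eta+n-1$ (in place of $\eta$), the formula \eqref{fd1} gives
\begin{equation*}
\mathfrak{D}_{z;p}^{\eta-\mu;\alpha}\bigl\{z^{\eta+n-1}\bigr\} = \frac{B_p^{\alpha}(\eta+n,\mu-\eta)}{\Gamma(\mu-\eta)}\, z^{\mu+n-1}.
\end{equation*}
Factoring out $z^{\mu-1}/\Gamma(\mu-\eta)$ from every term and collecting the remaining $z^n/n!$ into the sum immediately produces the right-hand side of \eqref{fd8}.

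The only conceptual step is the interchange of summation and the derivative operator; once Theorem \ref{th2a} is cited, everything else is bookkeeping. One small point worth checking carefully is that the shifted order $\eta-\mu$ (which has $\Re(\eta-\mu)<0$ by the hypothesis $\Re(\mu)>\Re(\eta)>0$) lies in the regime where \eqref{eq7} rather than \eqref{eq8} applies, so Theorem \ref{th1} is used without any extra differentiations, and the Beta-type parameters $(\eta+n,\mu-\eta)$ automatically have positive real parts, ensuring $B_p^{\alpha}(\eta+n,\mu-\eta)$ is well-defined by \eqref{Cbeta}.
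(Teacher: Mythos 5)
Your proposal is correct and follows essentially the same route as the paper: expand $E_{\gamma,\delta}^{\mu}(z)$ by its defining series, interchange the operator with the sum via the term-by-term differentiation result (Theorem \ref{th2a} --- which is indeed the right citation; the paper's own proof writes ``Theorem \ref{th2}'' but clearly intends \ref{th2a}), and then apply Theorem \ref{th1} to each monomial $z^{\eta+n-1}$ with order $\eta-\mu$. Your additional checks that $\Re(\eta-\mu)<0$ and that the Beta-parameters have positive real parts are sensible bookkeeping the paper leaves implicit.
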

\begin{remark}
In \cite{Mehrez} Mehrez and Tomovski introduced a new $p$-Mittag-Leffler function defined by
\begin{align}\label{MT}
E_{\lambda,\gamma,\delta;p}^{(\mu,\eta,\omega)}(z)=\sum_{k=0}^\infty\frac{(\mu)_k}{[\Gamma(\gamma k+\delta)]^\lambda}
\frac{B_p(\eta+k,\omega-\eta)}{B(\eta,\omega-\eta)}\frac{z^k}{k!}, (z \in \mathbb{C})
\end{align}
$$(\mu,\,\eta,\,\omega,\,\gamma,\,\delta,\,\lambda>0,\, \Re(p)>0).$$

\end{remark}
Hence we get the following corollary.
\begin{corollary} The following formula holds true for $E_{\gamma,\delta}^{\mu}(z)$:
\begin{align}
\mathfrak{D}_{z}^{\eta-\mu}\Big\{z^{\eta-1}E_{\gamma,\delta}^{\mu}(z);p\Big\}=
z^{\mu-1}\frac{\Gamma(\eta)}{\Gamma(\mu)}E_{1,\gamma,\delta;p}^{(\mu,\eta,\mu)}(z),
\end{align}
$$(\mu,\eta,\,\gamma,\,\delta>0,\, \Re(p)>0).$$
\end{corollary}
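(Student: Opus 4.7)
The plan is to deduce the corollary as a direct specialization of Theorem \ref{th7}. Setting $\alpha=1$ in the defining integral (\ref{Cbeta}) collapses the Mittag-Leffler kernel $E_1(-p/(t(1-t)))=e^{-p/(t(1-t))}$, so $B_p^{1}(\eta+n,\mu-\eta)=B_p(\eta+n,\mu-\eta)$ is the classical extended Beta function of Chaudhry et al. Correspondingly, on the operator side $\mathfrak{D}_{z;p}^{\eta-\mu;1}$ is the extended Riemann--Liouville derivative $\mathfrak{D}_{z}^{\eta-\mu}\{\cdot;p\}$ from (\ref{Ofrac}). Thus the right-hand side of (\ref{fd8}) at $\alpha=1$ becomes
$$\frac{z^{\mu-1}}{\Gamma(\mu-\eta)}\sum_{n=0}^\infty\frac{(\mu)_n}{\Gamma(\gamma n+\delta)}B_p(\eta+n,\mu-\eta)\frac{z^n}{n!}.$$

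The next step is to match this series term by term against the $p$-Mittag-Leffler function (\ref{MT}) with parameters $\lambda=1$ and $\omega=\mu$, which reads
$$E_{1,\gamma,\delta;p}^{(\mu,\eta,\mu)}(z)=\sum_{n=0}^\infty\frac{(\mu)_n}{\Gamma(\gamma n+\delta)}\frac{B_p(\eta+n,\mu-\eta)}{B(\eta,\mu-\eta)}\frac{z^n}{n!}.$$
The two series are identical up to the global factor $B(\eta,\mu-\eta)$; to convert one into the other I would apply the classical identity $B(\eta,\mu-\eta)=\Gamma(\eta)\Gamma(\mu-\eta)/\Gamma(\mu)$, which turns $1/\Gamma(\mu-\eta)$ into $\Gamma(\eta)/(\Gamma(\mu)\,B(\eta,\mu-\eta))$ and therefore produces exactly the prefactor $\Gamma(\eta)/\Gamma(\mu)$ appearing in the claimed identity.

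Since every manipulation is a reindexing or a Beta/Gamma reduction already justified in the paper, there is no substantive obstacle. The only point that requires care is verifying that the parameter triple $(\mu,\eta,\mu)$ in the superscript of (\ref{MT}) is chosen so that the Beta-ratio has the arguments $(\eta+n,\mu-\eta)$ that the fractional-derivative theorem actually produces; once this matching is written out, the corollary follows at once from Theorem \ref{th7}.
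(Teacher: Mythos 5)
Your proof is correct and is exactly the derivation the paper intends (the paper states the corollary with only the words ``Hence we get the following corollary'' after introducing the $p$-Mittag-Leffler function): specialize Theorem \ref{th7} to $\alpha=1$ so that $B_p^{1}=B_p$ and $\mathfrak{D}_{z;p}^{\eta-\mu;1}=\mathfrak{D}_{z}^{\eta-\mu}\{\cdot\,;p\}$, then absorb $1/\Gamma(\mu-\eta)$ into $\Gamma(\eta)/(\Gamma(\mu)B(\eta,\mu-\eta))$ to recover the normalization in \eqref{MT} with $\lambda=1$, $\omega=\mu$. Your parameter matching $(\mu,\eta,\mu)$ and the Beta--Gamma reduction are both right, so nothing is missing.
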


\begin{theorem}
The following result holds true:
\begin{align}\label{fd10}
\mathfrak{D}_{z;p}^{\eta-\mu;\alpha}\Big\{z^{\eta-1}\,_m\Psi_n\left[
                                                                        \begin{array}{cc}
                                                                          (\alpha_i,A_i)_{1,m}; &  \\
                                                                           & |z \\
                                                                          (\beta_j,B_j)_{1,n}; &  \\
                                                                        \end{array}
                                                                      \right]
\Big\}&=\frac{z^{\mu-1}}{\Gamma(\mu-\eta)}\notag\\
&\times\sum_{k=0}^\infty\frac{\prod_{i=1}^{m}\Gamma(\alpha_i+A_ik)}{\prod_{j=1}^{n}\Gamma(\beta_j+B_jk)}
B_{p}^{\alpha}(\eta+k,\mu-\eta)\frac{z^k}{k!},
\end{align}
where  $\Re(p)>0$, $\Re(\alpha)>0$, $\Re(\mu)>\Re(\eta)>0$ and $_m\Psi_n(z)$ represents the Fox-Wright function (see \cite{Kilbas}, pp. 56-58)
\begin{eqnarray}\label{fd11}
_m\Psi_n(z)=\,_m\Psi_n\left[
 \begin{array}{cc}
 (\alpha_i,A_i)_{1,m}; &  \\
 & |z \\
 (\beta_j,B_j)_{1,n}; &  \\
\end{array}
\right]=\sum_{k=0}^\infty\frac{\prod_{i=1}^{m}\Gamma(\alpha_i+A_ik)}{\prod_{j=1}^{n}\Gamma(\beta_j+B_jk)}\frac{z^k}{k!}.
\end{eqnarray}
\end{theorem}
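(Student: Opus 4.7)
The plan is to mirror almost exactly the argument used for Theorem \ref{th7}, since the Fox-Wright function $_m\Psi_n(z)$ is just a power series in $z$ with coefficients $\prod_i \Gamma(\alpha_i+A_ik)/\prod_j\Gamma(\beta_j+B_jk)$ in place of $(\mu)_k/\Gamma(\gamma k+\delta)$. So the structure of the proof is: expand, swap sum and operator, and apply Theorem \ref{th1}.

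First, I would substitute the series representation \eqref{fd11} of $_m\Psi_n$ into the left-hand side of \eqref{fd10}, obtaining
\begin{equation*}
\mathfrak{D}_{z;p}^{\eta-\mu;\alpha}\!\left\{z^{\eta-1}\sum_{k=0}^{\infty}\frac{\prod_{i=1}^{m}\Gamma(\alpha_i+A_ik)}{\prod_{j=1}^{n}\Gamma(\beta_j+B_jk)}\frac{z^{k}}{k!}\right\}=\mathfrak{D}_{z;p}^{\eta-\mu;\alpha}\!\left\{\sum_{k=0}^{\infty}\frac{\prod_{i=1}^{m}\Gamma(\alpha_i+A_ik)}{\prod_{j=1}^{n}\Gamma(\beta_j+B_jk)}\frac{z^{\eta+k-1}}{k!}\right\}.
\end{equation*}
Next, I would appeal to Theorem \ref{th2a}, which legitimizes the term-by-term application of the modified fractional derivative to an analytic function, to interchange the sum and the operator $\mathfrak{D}_{z;p}^{\eta-\mu;\alpha}$. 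This yields
\begin{equation*}
\sum_{k=0}^{\infty}\frac{\prod_{i=1}^{m}\Gamma(\alpha_i+A_ik)}{\prod_{j=1}^{n}\Gamma(\beta_j+B_jk)}\frac{1}{k!}\,\mathfrak{D}_{z;p}^{\eta-\mu;\alpha}\!\left\{z^{\eta+k-1}\right\}.
\end{equation*}
Finally, I would evaluate each monomial derivative by Theorem \ref{th1} with exponent $\eta+k-1$ and fractional order $\eta-\mu$, giving
\begin{equation*}
\mathfrak{D}_{z;p}^{\eta-\mu;\alpha}\!\left\{z^{\eta+k-1}\right\}=\frac{B_{p}^{\alpha}(\eta+k,\mu-\eta)}{\Gamma(\mu-\eta)}\,z^{\mu+k-1},
\end{equation*}
and factoring out $z^{\mu-1}/\Gamma(\mu-\eta)$ delivers the stated identity.

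The only genuinely delicate point is the justification of the term-by-term differentiation: I would need the series for $z^{\eta-1}{}_m\Psi_n(z)$ to be uniformly convergent on a closed subdisk about the origin so that Theorem \ref{th2a} applies, which in turn requires that $_m\Psi_n$ be analytic at the origin. This holds under the standard Fox-Wright convergence condition $\sum_j B_j-\sum_i A_i>-1$ (or $=-1$ with suitable restriction on $|z|$), which I would state as a hypothesis or, following the paper's style, assume tacitly from the given references. The remaining conditions $\Re(\mu)>\Re(\eta)>0$ and $\Re(p)>0$ are exactly what Theorem \ref{th1} needs in the shifted form $\mathfrak{D}_{z}^{\mu'}\{z^{\eta'}\}$ with $\mu'=\eta-\mu$ and $\eta'=\eta+k-1$, so no further analytic subtlety enters beyond the interchange step.
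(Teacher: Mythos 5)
Your proposal is correct and follows exactly the route the paper takes: the paper's proof is a one-line instruction to ``apply Theorem \ref{th1} and follow the same procedure as in Theorem \ref{th7},'' which is precisely the expand--interchange--evaluate argument you spell out. Your added remark on the Fox--Wright convergence condition is a justified refinement of the interchange step that the paper leaves tacit.
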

\begin{proof}
Applying Theorem \ref{th1} and followed the same procedure used in Theorem \ref{th7}, we get the desired result.
\end{proof}
\begin{remark}
In \cite{Sharma} Sharma and Devi introduced extended Wright generalized
hypergeometric function defined by
\begin{align}\label{fd12}
_{m+1}\Psi_{n+1}(z;p)&=\,_{m+1}\Psi_{n+1}\left[
 \begin{array}{cc}
 (\alpha_i,A_i)_{1,m},(\gamma,1); &  \\
 & |(z;p) \\
 (\beta_j,B_j)_{1,n}; (c,1) &  \\
\end{array}
\right]\notag\\=&\frac{1}{\Gamma(c-\gamma)}\sum_{k=0}^\infty\frac{\prod_{i=1}^{m}\Gamma(\alpha_i+A_ik)}{\prod_{j=1}^{n}
\Gamma(\beta_j+B_jk)}\frac{B_p(\gamma+k,c-\gamma)z^k}{k!},
\end{align}
$$(\Re(p)>0,\, \Re(c)>\Re(\gamma)>0).$$
\end{remark}
Hence we get the following corollary.
\begin{corollary} The following results holds true for the Fox-Wright
hypergeometric function:
\begin{align}
\mathfrak{D}_{z}^{\eta-\mu}\Big\{z^{\eta-1}\,_{m}\Psi_{n}\left[
                                                                        \begin{array}{cc}
                                                                          (\alpha_i,A_i)_{1,m}; &  \\
                                                                           & |z \\
                                                                          (\beta_j,B_j)_{1,n}; &  \\
                                                                        \end{array}
                                                                      \right]
;p\Big\}&=z^{\mu-1}{}_{m+1}\Psi_{n+1}\left[
 \begin{array}{cc}
 (\alpha_i,A_i)_{1,m},(\eta,1); &  \\
 & |(z;p) \\
 (\beta_j,B_j)_{1,n}; (\mu,1) &  \\
\end{array}
\right],
\end{align}
$$(\Re(p)>0,\, \Re(\mu)>\Re(\eta)>0).$$
\end{corollary}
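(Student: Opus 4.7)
The plan is to obtain this corollary as a direct specialization of the preceding theorem (equation (\ref{fd10})) to the case $\alpha = 1$ followed by recognition of the resulting series as the extended Wright hypergeometric function $_{m+1}\Psi_{n+1}$ defined in (\ref{fd12}).

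First, I would specialize the preceding theorem by setting $\alpha = 1$ in the definition of the modified fractional derivative (which reduces $\mathfrak{D}_{z;p}^{\eta-\mu;\alpha}$ to $\mathfrak{D}_{z}^{\eta-\mu}\{\,\cdot\,;p\}$, per the remark just after Definition \ref{frac}), and likewise reduce $B_p^{\alpha}$ to $B_p$ since $E_1(z) = e^z$. This yields
\begin{equation*}
\mathfrak{D}_{z}^{\eta-\mu}\Bigl\{z^{\eta-1}\,{}_m\Psi_n(z);p\Bigr\} = \frac{z^{\mu-1}}{\Gamma(\mu-\eta)}\sum_{k=0}^\infty \frac{\prod_{i=1}^{m}\Gamma(\alpha_i+A_ik)}{\prod_{j=1}^{n}\Gamma(\beta_j+B_jk)}\, B_{p}(\eta+k,\mu-\eta)\,\frac{z^k}{k!}.
\end{equation*}

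Next, I would compare this right-hand side against the definition (\ref{fd12}) of $_{m+1}\Psi_{n+1}(z;p)$ with the parameter identifications $\gamma \mapsto \eta$ and $c \mapsto \mu$, so that $c-\gamma = \mu - \eta$. Under this identification the factor $1/\Gamma(\mu-\eta)$ coming from the fractional derivative formula is precisely the normalizing constant $1/\Gamma(c-\gamma)$ appearing in (\ref{fd12}), and the extra parameter pairs $(\eta,1)$ and $(\mu,1)$ that are adjoined respectively to the top and bottom parameter lists in the corollary match the $(\gamma,1)$, $(c,1)$ slots in (\ref{fd12}). The summand then coincides term-by-term with that of $_{m+1}\Psi_{n+1}(z;p)$, and pulling out the common $z^{\mu-1}$ factor yields the claimed identity.

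I do not anticipate any substantive obstacle; the only care required is bookkeeping the parameter correspondence $\gamma\leftrightarrow\eta$, $c\leftrightarrow\mu$ and verifying that the conditions $\Re(p) > 0$, $\Re(\mu) > \Re(\eta) > 0$ of the corollary suffice to invoke both the preceding theorem (which asks for the same inequalities plus $\Re(\alpha)>0$, trivially satisfied by $\alpha=1$) and the definition (\ref{fd12}) (which asks $\Re(c)>\Re(\gamma)>0$, i.e.\ $\Re(\mu)>\Re(\eta)>0$). Thus the corollary follows purely by specialization and recognition, with no new calculation required.
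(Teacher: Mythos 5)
Your proposal is correct and matches the paper's (implicit) argument exactly: the paper offers no written proof beyond the word ``Hence,'' intending precisely the specialization $\alpha=1$ of the preceding theorem together with the identification $\gamma\leftrightarrow\eta$, $c\leftrightarrow\mu$ in the Sharma--Devi definition of ${}_{m+1}\Psi_{n+1}(z;p)$. Your bookkeeping of the normalizing factor $1/\Gamma(\mu-\eta)=1/\Gamma(c-\gamma)$ and of the parameter conditions is accurate.
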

\section{Concluding remarks}
In this paper, we  established a modified extension of Riemnn-Liouville fractional derivative operator.  We conclude that when $\alpha=1$, then all the results established in this paper will reduce to the  results associated with classical Riemnn-Liouville derivative operator (see \cite{Ozerslan}).
Similarly, if we letting $\alpha=1$ and $p=0$ then all the results established in this paper will reduce to the  results associated with classical Riemnn-Liouville fractional derivative operator (see \cite{Kilbas}).

\begin{remark}
The preprint of this paper is available at 'https://arxiv.org/abs/1801.05001'.
\end{remark}

\textbf{Acknowledgments}\\
The authors would like to express profound gratitude to Prof. Virginia Kiryakova for valuable comments and remarks which improved the final version of this paper\\


\vskip 20pt
 
\end{document}